\newtheorem{thm}{Theorem}[section]
\newtheorem{lem}[thm]{Lemma}
\newtheorem{exa}[thm]{Example}
\newtheorem{pro}[thm]{Proposition}
\newtheorem{defn}[thm]{Definition}
\newtheorem{cor}[thm]{Corollary}
\newtheorem{rem}[thm]{Remark}
\newcommand {\Z}{\mathbb Z} % Integers
\newcommand {\F}{\mathbb F} % Fields
\newcommand {\N}{\mathbb N} % Natural numbers
\title{A search for c-Wieferich primes}
\author{Alex Samuel Bamunoba\footnote{Department of Mathematics, Makerere University, Uganda, (\textit{bamunoba@aims.ac.za} or \textit{bamunoba@cns.mak.ac.ug})}\;\footnote{``This work was in part carried out at the Department of Mathematics of Stockholm University with the financial support from the Makerere-Sida Bilateral Programme Phase IV, Project 316 ``Capacity building in Mathematics and its applications".}\\ Jonas Bergstr\"om \footnote{Department of Mathematics, Stockholm University, Sweden, (\textit{jonasb@math.su.se})}}
\date{}
\begin{document}

\maketitle

\begin{abstract}
Let $q$ be a power of a prime number $p$, $\F_q$ be a finite field with $q$ elements and $\mathcal{G}$ be a subgroup of $(\F_q,+)$ of order $p$. We give an existence criterion and an algorithm for computing maximally $\mathcal{G}$-fixed c-Wieferich primes in $\F_q[T]$. Using the criterion, we study how c-Wieferich primes behave in $\F_q[T]$ extensions.
\end{abstract}

%{Keywords:} {\it MSC 2010 11A99, 12E05}: c-Wieferich primes, $\mathcal{G}$-fixed polynomials, Artin-Schreier polynomials.

\section{Introduction}

Let $p$ be a prime number, $q=p^{m}$ for some $m\in\Z_{\geqslant 1}$, $\F_q$ be a finite field of $q$ elements and $T$ be an indeterminate. In addition, let $\mathcal{A}:=\F_q[T]$ be the ring of polynomials in $T$ defined over $\F_q$ and $\mathcal{A}_+$ be the set of monic polynomials in $\mathcal{A}$. We shall use the symbols $\mathcal{P}$ and $\mathcal{Q}$ to denote monic irreducible (also called prime) polynomials in $\mathcal{A}$. The symbol $\mathcal{Q}_d$ will denote any prime polynomial in $\mathcal{A}$ of degree $d$. For each $i\in \Z_{\geqslant 1}$, we let $[i]:=T^{q^i}-T$ and $L_i:=[i][i-1]\cdots [1]$. The symbol $[i]$ represents the product of all prime polynomials in $\mathcal{A}$ of degree dividing $i$ while $L_i$ represents the least common multiple of monic polynomials in $\mathcal{A}$ of degree $i$. We shall adopt the convention that, ``the value of an empty product is $1$" and therefore, set $[0]:=1=:L_0$.

Let $\mathbf{k}:=\F_q(T)$ be the field of fractions of $\mathcal{A}$, $\mathcal{K}$ be a fixed algebraic closure of $\mathbf{k}$ and $\tau:\mathcal{K}\to \mathcal{K}, \alpha\mapsto \alpha^q$ be the Frobenius $\F_q$-automorphism of $\mathcal{K}$. Let $\mathbf{k}\{\tau\}$ be the polynomial ring in $\tau$ over $\mathbf{k}$ with multiplication satisfying the commutation relation $\tau \theta =\theta^q\tau$ for any $\theta\in \mathbf{k}$. It is clear that $\mathbf{k}\{\tau\}$ is an $\F_q$-algebra, (sometimes called, the twisted polynomial ring over $\mathbf{k}$). Let $\mathscr{A}(\mathbf{k})$ be the set of additive polynomials over $\mathbf{k}$. Recall that, a polynomial $g\in \mathbf{k}[X]$ is said to be additive over $\mathbf{k}$ if $g(X+Y)=g(X)+g(Y)$ as a polynomial in $\mathbf{k}[X,Y]$. With the usual addition of polynomials in $\mathscr{A}(\mathbf{k})$ and multiplication given by composition of polynomial mappings, $\mathscr{A}(\mathbf{k})$ becomes a ring, called the ring of additive polynomials over $\mathbf{k}$. To each $G\in \mathbf{k}\{\tau\}$, we associate an element $g\in \mathscr{A}(\mathbf{k})$ defined by letting $G$ act on the indeterminate $X$ via $\tau$. For example, if $G=\tau+T\tau^0$, then $g=(\tau+T\tau^0)X=X^q+TX$. This gives a bijection between $\mathbf{k}\{\tau\}$ and $\mathscr{A}(\mathbf{k})$.

The ring homomorphism $\rho:\mathcal{A}\to\mathbf{k}\{\tau\}$ characterised by $T\mapsto \tau+T\tau^0$ is called the Carlitz $\mathcal{A}$-module homomorphism. This ring homomorphism is in fact a homomorphism of $\F_q$-algebras. The mapping $\rho$ is an $\mathcal{A}$-module homomorphism because it induces a ``new $\mathcal{A}$-action" on $\mathcal{K}$ as follows: consider the mapping $\ast:\mathcal{A}\times \mathcal{K}\to \mathcal{K}$ defined by $(N,\alpha)\mapsto\rho^N\alpha$, where $\rho^N$ is the Carlitz $N$-endomorphism over $\mathcal{A}$. Since to each $\rho^N\in \mathbf{k}\{\tau\}$, there is a corresponding additive polynomial $\rho_N(X)$, called the Carlitz $N$-polynomial (actually $\rho^N$ is defined over $\mathcal{A}$, since the image of $\mathcal{A}$ under $\rho$ is a subring of $\mathcal{A}\{\tau\}$). The action $\ast$ is equivalently defined as: $N\ast\alpha=\rho^N\alpha=\rho_N(\alpha)$, evaluating the polynomial $\rho_N(X)$ at $\alpha$. We call the pair $(\rho,\mathcal{K})=:\mathscr{C}(\mathcal{K})$, the Carlitz $\mathcal{A}$-module (or Carlitzification of $\mathcal{K}$). The Carlitz $\mathcal{A}$-module homomorphism $\rho$ is the simplest example of a sign normalised rank one Drinfeld $\mathcal{A}$-module. Technically, $\rho$ can also be thought of as the functor from the category of $\mathcal{A}$-algebras to the category of $\mathcal{A}$-modules which sends an $\mathcal{A}$-algebra $\mathcal{K}$ to the unique $\mathcal{A}$-module $\mathscr{C}(\mathcal{K})$ which has $\mathcal{K}$ as the underlying Abelian group, and such that the (left) multiplication by $T$ of an $\alpha\in \mathcal{K}$ is $\rho_{T}(\alpha)=\alpha^q+T\alpha$. For an introduction to the general theory of Drinfeld $\mathcal{A}$-modules, see \cite{DGOSS, MROSEN} and \cite{DTHAKUR}.

With this background, we are in a position to define the notion of c-Wieferich (or Carlitz-Wieferich) primes. 
\begin{defn}\label{cwprimes}
Let $a\in \mathcal{A}-\{0\}$. A prime polynomial $\mathcal{P}$ in $\mathcal{A}$ is said to be a c-Wieferich prime to base $a$ if 
\begin{align}\label{acwprime}
\rho_{\mathcal{P}}(a)\equiv a^{q^{\deg(\mathcal{P})}}(\bmod\;\mathcal{P}^2).
\end{align} 
\end{defn}
This notion of c-Wieferich primes was introduced in 1994 by D. Thakur \cite{dinesh1994iwasawa}. In \cite{DSTHAKURR}, D. Thakur showed that, if $a$ is a $p$-th power, (i.e., the derivative of $a$ is $0$), then \eqref{acwprime} is equivalent to $\rho_{\mathcal{P}}(a)\equiv a(\bmod\;\mathcal{P}^2)$. Since for any $N$ in $\mathcal{A}$, the Carlitz $N$-polynomial $\rho_N(X)$ is $\F_q$-linear, it follows that the congruence $\rho_{\mathcal{P}}(a)\equiv a(\bmod\;\mathcal{P}^2)$ for any $a\in \F_q^{\times}$ is equivalent to $\rho_{\mathcal{P}-1}(1)\equiv 0(\bmod\;\mathcal{P}^2)$. Therefore, a prime polynomial $\mathcal{P}$ in $\mathcal{A}$ is a c-Wieferich prime to base $\alpha\in \F_q^{\times}$ (or base $1$) if $\rho_{\mathcal{P}-1}(1)\equiv 0(\bmod\;\mathcal{P}^2)$. In this paper, the term ``c-Wieferich prime" will refer to a c-Wieferich prime to base $1$. For example, $T^6 + T^4 + T^3 + T^2 + 2T + 2$, $T^9 + T^6 + T^4 + T^2 + 2T + 2, T^{12} + 2T^{10} + T^9 + 2T^4 + 2T^3 + T^2 + 1$ and $T^{15} + T^{13} + T^{12} + T^{11} + 2T^{10} + 2T^7 + 2T^5 + 2T^4 + T^3 + T^2 + T + 1$ are all c-Wieferich primes in $\F_3[T]$. These are the only c-Wieferich primes in $\F_3[T]$ of degree at most $60$. However, in the case of $q=2$, there is an anomolous behaviour where all the prime polynomials in $\F_2[T]-\{T, T+1\}$ are c-Wieferich primes. This is because $\rho_{\mathcal{P}-1}(1)=0$ for any prime $\mathcal{P}\in \F_2[T]$ of degree at least $2$. 

Examples of c-Wieferich primes in $\F_q[T]$ (for $q\geqslant 3$) are quite rare and it is computationally intensive to use Definition \ref{cwprimes} to find them. For this reason, we search for other conditions that can be used in order to gain more insight and be able to compute these objects. To achieve this, we need some extra notation. For each $n\in\Z_{\geqslant 0}$, let $F_n$ denote the numerator of $\sum_{j=0}^{n}(-1)^j(L_j)^{-1}$ written as a rational polynomial without common factors. It is an easy exercise to show that the polynomials $F_n$ are monic in $\mathcal{A}$ and solutions to the recurrence relation $F_0=1$, $F_{i}=(-1)^{i}+[i]F_{i-1}$, for $i=1,2,\ldots$. For the proof, see \cite[Lemma 4.2]{ASB2}. 
\begin{pro}[{\cite[Proposition 4.3]{ASB2}}]\label{ASB2}
$\mathcal{P}$ is a c-Wieferich prime in $\mathcal{A}$ if and only if $F_{\deg(\mathcal{P})-1}\equiv 0(\bmod\;\mathcal{P})$. 
\end{pro}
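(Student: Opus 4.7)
The plan is to pin down $\rho_{\mathcal{P}-1}(1) \pmod{\mathcal{P}^2}$ by using the functional equation for the Carlitz logarithm. Set $d := \deg(\mathcal{P})$ and write $\rho_{\mathcal{P}}(X) = \sum_{j=0}^{d} a_j X^{q^j}$ with $a_0 = \mathcal{P}$ and $a_d = 1$. By the Carlitz analogue of Fermat's little theorem, $\rho_{\mathcal{P}}(X) \equiv X^{q^d} \pmod{\mathcal{P}}$, so $a_j \in \mathcal{P}\mathcal{A}$ for every $0 \leq j \leq d-1$. Consequently $\rho_{\mathcal{P}-1}(1) = \rho_{\mathcal{P}}(1) - 1 = \sum_{j=0}^{d-1} a_j$, and the c-Wieferich condition becomes $\sum_{j=0}^{d-1} a_j \equiv 0 \pmod{\mathcal{P}^2}$.

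The engine is the Carlitz logarithm $\log_C(X) := \sum_{i \geq 0}(-1)^i X^{q^i}/L_i \in \mathbf{k}[[X]]$, which as the formal inverse of the Carlitz exponential satisfies $\log_C(\rho_N(X)) = N\log_C(X)$ in $\mathbf{k}[[X]]$ for every $N \in \mathcal{A}$. I would apply this with $N = \mathcal{P}$, expand $\rho_{\mathcal{P}}(X)^{q^i} = \sum_j a_j^{q^i} X^{q^{i+j}}$ via Frobenius, reindex by $k = i + j$, and match the coefficient of $X^{q^k}$ on each side to obtain the exact identity
\[
\sum_{j=0}^{\min(k,d)}(-1)^j\frac{a_j^{q^{k-j}}}{L_{k-j}} \;=\; \frac{\mathcal{P}}{L_k}
\]
in $\mathbf{k}$, valid for every $k \geq 0$.

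I would then specialise to $0 \leq k \leq d-1$ and transport this into the $\mathcal{P}$-adic completion $\mathcal{A}_{\mathcal{P}}$. Every $L_m$ with $m < d$ is a unit in $\mathcal{A}_{\mathcal{P}}$ because its irreducible factors all have degree at most $m < d$; and for indices $j < k$ we have $a_j^{q^{k-j}} \in \mathcal{P}^{q^{k-j}} \subseteq \mathcal{P}^2$, so those terms drop out modulo $\mathcal{P}^2$. Only the $j = k$ term contributes, giving $a_k L_k \equiv (-1)^k\mathcal{P} \pmod{\mathcal{P}^2}$ for each $0 \leq k \leq d-1$. Summing from $k = 0$ to $d-1$ and using the given identity $F_{d-1}/L_{d-1} = \sum_{k=0}^{d-1}(-1)^k/L_k$ yields
\[
\rho_{\mathcal{P}-1}(1) \;\equiv\; \frac{\mathcal{P}\, F_{d-1}}{L_{d-1}} \pmod{\mathcal{P}^2}
\]
in $\mathcal{A}_{\mathcal{P}}$. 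Since $L_{d-1}$ is coprime to $\mathcal{P}$, the right-hand side lies in $\mathcal{P}^2\mathcal{A}_{\mathcal{P}}$ if and only if $\mathcal{P} \mid F_{d-1}$ in $\mathcal{A}$, which gives the proposition. The main subtlety is the passage from the formal power series identity over $\mathbf{k}$ to a $\mathcal{P}$-adic congruence; however, once $k$ is capped at $d-1$ this reduces entirely to the two observations that the $L_m$ with $m < d$ are $\mathcal{P}$-adic units and that the Frobenius-propagated factors $a_j^{q^{k-j}}$ for $j < k$ are automatically deep enough in $\mathcal{P}$ to be ignored modulo $\mathcal{P}^2$.
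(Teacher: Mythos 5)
Your proof is correct: the coefficient identity you extract from $\log_C(\rho_{\mathcal{P}}(X))=\mathcal{P}\log_C(X)$ does give $a_kL_k\equiv(-1)^k\mathcal{P}\ (\bmod\ \mathcal{P}^2)$ for $0\leqslant k\leqslant d-1$ (the terms with $j<k$ lie in $\mathcal{P}^{q}\subseteq\mathcal{P}^2$, and each $L_m$ with $m<d$ is prime to $\mathcal{P}$), whence $\rho_{\mathcal{P}-1}(1)=\sum_{k=0}^{d-1}a_k\equiv\mathcal{P}F_{d-1}/L_{d-1}\ (\bmod\ \mathcal{P}^2)$ and the stated equivalence. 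The paper itself only cites this result rather than proving it, and your argument is essentially the same derivation used in the cited source (and in Thakur's work), namely the congruence for the Carlitz coefficients followed by summation against $\sum(-1)^k/L_k=F_{d-1}/L_{d-1}$.
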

Proposition \ref{ASB2} was independently discovered by D. Thakur in his work on c-Wieferich primes, see \cite{DSTHAKURR}. The last three examples of c-Wieferich primes in $\F_3[T]$ listed on page 2 were obtained using Proposition \ref{ASB2}. 

There is a link between c-Wieferich primes and $\zeta_{\mathcal{A}}(1)$, the Carlitz-Goss zeta value at $1$. To see this, let 
\begin{align*}
\zeta_{\mathcal{A}}(1):=\sum_{a\in \mathcal{A}_+}\frac{1}{a},~~\text{ and }~~\zeta_{\mathcal{A},\mathcal{P}}(1):=\sum_{\substack{a\in \mathcal{A}_+\\ (a,\mathcal{P})=1}}\frac{1}{a},
\end{align*}
where $\zeta_{\mathcal{A},\mathcal{P}}(1)$ is the Carlitz-Goss $\mathcal{P}$-adic zeta value at $1$. For $q>2$, D. Thakur showed that, a prime $\mathcal{P}\in \mathcal{A}$ is a c-Wieferich prime if and only if $\mathcal{P}$ divides $\zeta_{\mathcal{A},\mathcal{P}}(1)$ if and only if $\mathcal{P}^2$ divides $\zeta_{\mathcal{A}}(1)$, see \cite[Theorem 5]{DSTHAKURR}. For the link between c-Wieferich primes and Mersenne primes, refer to \cite{dongquan}.

Proposition \ref{ASB2} leads us to a naive algorithm for computing c-Wieferich primes, see  Algorithm \ref{ALG01}.
\begin{algorithm}[ht]
\caption{\; Computing c-Wieferich primes I.}
\label{ALG01}
\begin{enumerate}
\item[] \textbf{Input}: $q$ - size of the base field of $\mathcal{A}$, and $n$ - the degree of c-Wieferich primes required.
\item[] \textbf{Output}: Product of c-Wieferich primes of degree less than or equal to $n$, (in fact dividing $n$).

1. $F\longleftarrow 1$, $B$ an empty list.

2. for $i=1$ to $n-1$

\hspace{1cm} $F\longleftarrow (-1)^i+[i]F$, (where $[i]=T^{q^i}-T$)

3. $B\longleftarrow {\rm GCD}([n],F)$

\item[] \textbf{Return}: $B$
\end{enumerate}
\end{algorithm}

In Algorithm \ref{ALG01}, one recursively computes $F_i$ for $i=1$ to $n-1$ and lastly, the gcd of $F_{n-1}$ and $[n]$. This yields a product of c-Wieferich primes in $\mathcal{A}$ of degree dividing $n$. Another way to quickly check for existence of c-Wieferich primes of degree say dividing $n$ is to compute ${\rm resultant}([n],F_{n-1})$, the resultant of $[n]$ and $F_{n-1}$, if ${\rm resultant}([n],F_{n-1})\neq 0$, then there are some c-Wieferich primes of degree dividing $n$. In Table \ref{eval_table1} we give experimental evidence for the existence of c-Wieferich primes in $\mathcal{A}$, obtained using Algorithm \ref{ALG01} implemented in SAGE Mathematics Software. We used $\F_q:=\F_p(t)$ with $f_{\rm min}^{t}(X)$, the minimum polynomial of $t$ over $\F_p$.

\begin{table}[ht!]
\centering \footnotesize
\begin{tabular}{cccc}
\hline
\hline
$\F_q$ & $f_{\rm min}^t(X)$ & c-Wieferich primes in $\F_q[T]$ of least degrees \\
\hline
\hline
$\F_{3}$  &  & $T^6 + T^4 + T^3 + T^2 + 2T + 2$, $T^9 + T^6 + T^4 + T^2 + 2T + 2$ \\
$\F_{2^2}$  & $X^2+X+1$ & $T^2+T+t$, $T^2+T+t+1$ \\
$\F_{5}$  &  & $T^5+T+1$, $T^{10}+3T^6+4T^5 +T^2+T+1$ \\
$\F_{7}$  & & $T^7 + 6T + 3$, $T^{14} + 5T^8 + 5T^7 + T^2 + 2T + 3$ \\
$\F_{2^3}$  & $X^3 + X + 1$ & $T^2 + T + 1$, $T^2 + T + t + 1$, $T^2 + T + t^2 + 1$, $T^2 + T + t^2 + t + 1$, $T^4 + T + 1$, $T^4 + T + t + 1$,\\ && $T^4 + T + t^2 + 1$, $T^4 + T + t^2 + t + 1$\\
$\F_{3^2}$ & $X^2 + 2X + 2$ & $T^{3} + \left(t + 1\right) T + 1$, $T^{3} + \left(t + 1\right) T + t$, $T^{3} + \left(t + 1\right) T + 2 t + 2$, $T^{3} + \left(2 t + 2\right) T + 1$,\\ & & $T^{3} + \left(2 t + 2\right) T + t + 1$, $T^{3} + \left(2 t + 2\right) T + 2 t + 1$ \\ 
$\F_{2^4}$ & $X^4+X+1$ & $T^2+T+t^3$, \ldots, $T^2+T+t^3+t^2+t+1$, {\color{blue}{$T^3+T+1$}}, ${\color{red}{T^3+T^2+1}}$,\ldots\\ & & ${\color{blue}{T^3+(t^3+t^2+t+1)T^2+(t^3+t+1)T+t}}$, \ldots\\
$\F_{3^3}$ & $X^3 + 2X + 1$ & --\\
$\F_{5^2}$ & $X^2+4X+2$ & $T^5 + 4T + 3, T^5 + 4T + t, T^5 + 4T + 2t + 2, T^5 + 4T + 3t + 4$ and $T^5 + 4T + 4t + 1$\\
$\F_{7^2}$ & $X^2+6X+3$ & $T^7 + 6T + t + 1, T^7 + 6T + 2t + 4, T^7 + 6T + 3t, T^7 + 6T + 4t + 3, $\\ & & $T^7 + 6T + 6t + 2, T^7 + 6T + 5, T^7 + 6T + 5t + 6$ \\
\hline
\end{tabular}
\caption{Examples of c-Wieferich primes in some $\F_q[T]$ with $q$ small.}
\label{eval_table1}
\end{table}

The examples in Table \ref{eval_table1} indicate the non-existence of c-Wieferich primes of degree $1$. This is in accordance with \cite[Corollary 4.5]{ASB2} which tells us that, there are no c-Wieferich primes of degree $1$. Although Algorithm \ref{ALG01} works well for small degrees, its misgiving lies in the exponential growth of the degrees of the polynomials involved at the third step of Algorithm \ref{ALG01}. This motivates us to search for more efficient algorithms.

D. Thakur \cite{DSTHAKURR} points out that, the naive guess that the degree of the c-Wieferich prime  is a multiple of the characteristic of $\F_q$ fails in characteristic 2 by V. Mauduit's examples in \cite{veronique}. We will give more counter-examples in the case when $q$ is even. Despite this, we still believe that Thakur's naive guess is true in odd characteristic and it is one of the motivations for this paper. The remainder of the paper is organized as follows. In Section \ref{A}, we will give preliminary results from the theory of finite fields that will be used in proving the criterion for maximally $\mathcal{G}$-fixed c-Wieferich primes. In Section \ref{C}, we establish two criteria for maximally $\mathcal{G}$-fixed c-Wieferich primes in $\mathcal{A}$. In Section \ref{D}, we explain two algorithms (based on Theorems \ref{IKL} and \ref{IKL2}) and use them to demonstrate computations of $\mathcal{G}$-fixed c-Wieferich primes. In Section \ref{E}, we give a mixed bag of results regarding the distribution of c-Wieferich primes in constant field extensions of $\F_q[T]$.

\section{Some results from finite field theory} \label{A}

\begin{thm}\label{CII?}
Let $p$ be the characteristic of $\F_q$, $\beta\in \F_q^{\times}$ and $\gamma\in \F_{q^r}^{\times}$ for some $r\in \Z_{\geqslant 1}$. The trinomial $T^p-\beta T-\gamma$ is irreducible over $\F_{q^r}$ if and only if it has no root in $\F_{q^r}$.
\end{thm}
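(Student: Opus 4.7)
The forward implication is immediate: a polynomial irreducible over $\F_{q^r}$ of degree $p \geq 2$ cannot have a root in $\F_{q^r}$. The plan is to prove the converse by extracting a root in $\F_{q^r}$ whenever $f(T) := T^p - \beta T - \gamma$ fails to be irreducible, exploiting the $\F_p$-linearized structure of $f$.

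First I would observe that $h(T) := T^p - \beta T$ is an $\F_p$-linearized polynomial. Since $\beta \neq 0$, the root set $W$ of $h$ in $\overline{\F_q}$ has cardinality $p$: writing $\zeta$ for any element with $\zeta^{p-1} = \beta$, one has $h(T) = T \prod_{c \in \F_p^{\times}}(T - c\zeta)$. Hence $W$ is a one-dimensional $\F_p$-vector space. Fixing any root $\alpha \in \overline{\F_q}$ of $f$, the identity $f(\alpha + w) = f(\alpha) + h(w) = 0$ for every $w \in W$ shows that the root set of $f$ equals the coset $\alpha + W$, so in particular $f$ is separable.

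Next I would describe the action of the Frobenius $\sigma : x \mapsto x^{q^r}$ of $\overline{\F_q}/\F_{q^r}$ on this coset. Because $\beta \in \F_q \subseteq \F_{q^r}$ is $\sigma$-fixed, $\sigma$ preserves $W$, and being an $\F_p$-linear automorphism of a one-dimensional $\F_p$-space, $\sigma|_W$ is multiplication by some $\lambda \in \F_p^{\times}$. Write $\sigma(\alpha) = \alpha + w_1$ with $w_1 \in W$. A direct induction, using additivity of $\sigma$, then gives
\[
\sigma^{k}(\alpha + w) \;=\; \alpha \,+\, (1 + \lambda + \cdots + \lambda^{k-1})\,w_1 \,+\, \lambda^{k} w
\]
for every $w \in W$ and $k \geq 1$; equivalently, $\sigma$ acts on the labelling $w \in W \leftrightarrow \alpha + w$ by the affine map $w \mapsto w_1 + \lambda w$.

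The argument then finishes with a case analysis on $(\lambda, w_1)$. If $\lambda \neq 1$, then $w^{*} := w_1/(1-\lambda) \in W$ is a fixed point of this affine map, so $\alpha + w^{*} \in \F_{q^r}$ is a root of $f$. If $\lambda = 1$ and $w_1 = 0$, then $\sigma$ fixes $\alpha$ and $f$ splits completely in $\F_{q^r}$. Finally, if $\lambda = 1$ and $w_1 \neq 0$, then $\sigma^{k}(\alpha) = \alpha + k w_1$ equals $\alpha$ only when $p \mid k$, so the $\sigma$-orbit of $\alpha$ has size exactly $p$, and the minimal polynomial of $\alpha$ over $\F_{q^r}$ is forced to equal $f$. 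Consequently, ``$f$ has no root in $\F_{q^r}$'' coincides with the third case, which is precisely ``$f$ is irreducible'', giving the claimed equivalence. The only subtlety is the structural set-up: identifying $W$ as a one-dimensional $\F_p$-space (which uses $\beta \neq 0$) and packaging the Frobenius action into the pair $(\lambda, w_1)$. Once that is in place, the case analysis reduces to the elementary dichotomy for affine self-maps of $\F_p$.
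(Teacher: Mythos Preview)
Your argument is correct. The paper itself gives no proof beyond the instruction ``modify the arguments in the proof of \cite[Theorem 3.78]{RLidl01}'', and what you have written is precisely such a modification: you analyze the Frobenius action on the root coset $\alpha + W$, exactly as Lidl--Niederreiter do for the classical Artin--Schreier trinomial $T^p - T - \gamma$.

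The one genuine addition you supply, which the paper leaves implicit, is the parameter $\lambda$. In the classical case $\beta = 1$ one has $W = \F_p$, so $\sigma$ fixes $W$ pointwise and $\lambda = 1$ automatically; the argument then reduces to your last two subcases. For general $\beta \in \F_q^{\times}$ the kernel $W = \chi\F_p$ (with $\chi^{p-1} = \beta$) need not lie in $\F_{q^r}$, and you correctly observe that $\sigma$ may scale $W$ by some $\lambda \neq 1$. Your affine fixed-point computation then shows that this extra case always produces a root in $\F_{q^r}$, so it cannot obstruct the equivalence. This is exactly the care the generalization demands, and your case analysis is clean and complete.
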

\begin{proof}
Modify the arguments in the proof of \cite[Theorem 3.78]{RLidl01}.
\end{proof}
The prime polynomials of the form $T^p-\beta T-\gamma$ where $\beta\in \F_q^{\times}$ and $\gamma\in \F_{q^r}$, $r\in\Z_{\geqslant 1}$ will be called \textit{almost} Artin-Schreier primes for $\F_{q^r}[T]$. If $q=p$, and $\beta=1$, then these are the Artin-Schreier primes for $\F_p[T]$. The next result describes precisely the decomposition of polynomials of the form $T^q-T-\alpha\in \F_{q^r}[T]$.

\begin{thm}\label{BIII?}
Let $q\geqslant 3$, $\alpha\in\F_{q^r}$ for some $r\in\Z_{\geqslant 1}$. If ${\rm Tr}_{\F_{q^r}/\F_q}(\alpha)\neq 0$, then $T^q-T-\alpha$ factors as 
\begin{align}\label{IUO}
T^q-T-\alpha =(T^p-\beta T-\gamma_1)(T^p-\beta T-\gamma_2)\cdots (T^p-\beta T-\gamma_{p^{-1}q}), \text{ where }\beta=({\rm Tr}_{\F_{q^r}/\F_q}(\alpha))^{p-1}
\end{align}
and $\gamma_1,\ldots,\gamma_{p^{-1}q}\in\F_{q^r}^{\times}$ are distinct. If ${\rm Tr}_{\F_{q^r}/\F_q}(\alpha)=0$, then $T^q-T-\alpha$ splits completely in $\F_{q^r}[T]$.
\end{thm}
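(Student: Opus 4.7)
The plan is to treat the two cases separately. The split case ($\mathrm{Tr}_{\F_{q^r}/\F_q}(\alpha)=0$) is a short Artin-Schreier / additive Hilbert $90$ argument: the $\F_q$-linear endomorphism $y\mapsto y^q-y$ of $\F_{q^r}$ has kernel $\F_q$, so its image has $\F_q$-dimension $r-1$; this image lies in $\ker(\mathrm{Tr}_{\F_{q^r}/\F_q})$, which has the same dimension, so the two coincide. Hence $\mathrm{Tr}(\alpha)=0$ furnishes a root $y_0\in\F_{q^r}$ of $T^q-T-\alpha$, and $\{y_0+c:c\in\F_q\}\subset\F_{q^r}$ gives the complete splitting.

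For the nonzero-trace case, set $t:=\mathrm{Tr}_{\F_{q^r}/\F_q}(\alpha)\in\F_q^\times$ and $\beta=t^{p-1}$, and let $x\in\overline{\F_q}$ be any root of $T^q-T-\alpha$, so that the full root set is $x+\F_q$. The subgroup $t\F_p$ of $\F_q$ has order $p$ and index $q/p$; pick transversal representatives $c_1,\dots,c_{q/p}\in\F_q$. Applying the standard identity $\prod_{k\in\F_p}(S-tk)=S^p-t^{p-1}S$ with $S=T-x-c_j$ shows that the polynomial vanishing on the coset $(x+c_j)+t\F_p$ is
\begin{equation*}
\prod_{k\in\F_p}\bigl(T-x-c_j-tk\bigr)=T^p-\beta T-\gamma_j,\qquad \gamma_j:=(x+c_j)^p-\beta(x+c_j).
\end{equation*}
Multiplying over $j$ then recovers $T^q-T-\alpha$, since both sides are monic of degree $q$ with the same root set.

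The crux is to verify $\gamma_j\in\F_{q^r}$. Writing $y:=x^p-\beta x$ one has $\gamma_j=y+(c_j^p-\beta c_j)$, and since $c_j\in\F_q$ it suffices to show $y\in\F_{q^r}$. Using $x^q=x+\alpha$ and $\beta\in\F_q$, a short computation gives $y^q-y=\alpha^p-\beta\alpha$, and iterating the $q$-Frobenius $r$ times yields
\begin{equation*}
y^{q^r}-y=\mathrm{Tr}_{\F_{q^r}/\F_q}(\alpha^p-\beta\alpha)=\mathrm{Tr}(\alpha)^p-\beta\,\mathrm{Tr}(\alpha)=t^p-t^{p-1}\cdot t=0,
\end{equation*}
using that trace commutes with the $p$-th power in characteristic $p$. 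Distinctness of the $\gamma_j$ is immediate, because $\gamma_j=\gamma_{j'}$ forces $c_j-c_{j'}\in\ker(S\mapsto S^p-\beta S)=t\F_p$; and $\gamma_j\neq 0$ because $\gamma_j=0$ would put $x+c_j\in t\F_p\subset\F_q$, forcing $\alpha=x^q-x=0$ and contradicting $t\neq 0$.

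The main obstacle is recognising the precise role played by the choice $\beta=t^{p-1}$: this is exactly what makes the final trace identity collapse ($t^p-t^{p-1}\cdot t=0$) and hence ensures $y\in\F_{q^r}$. Once that identity is secured, every other ingredient (the cosets $(x+c_j)+t\F_p$, the factor polynomials $T^p-\beta T-\gamma_j$, distinctness, and nonvanishing of the $\gamma_j$) is forced by the structure of $\ker(S^p-\beta S)=t\F_p\subset\F_q$.
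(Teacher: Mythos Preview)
Your proof is correct and is precisely the standard argument: the paper itself does not give a proof but only refers to \cite[Theorem~3.80]{RLidl01}, and what you have written is exactly the adaptation of that proof to the setting $\alpha\in\F_{q^r}$, including the key observation that $\beta=t^{p-1}$ forces $\mathrm{Tr}_{\F_{q^r}/\F_q}(\alpha^p-\beta\alpha)=t^p-t^{p-1}t=0$ and hence $\gamma_j\in\F_{q^r}$.
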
 
\begin{proof}
Modify the arguments in the proof of \cite[Theorem 3.80]{RLidl01}.
\end{proof}

\begin{exa}\label{firstexample}
Let $\F_{3^4}:=\F_3(t)$ where $f^t_{\rm min}(X)=X^4 + 2X^3 + 2$ is the minimal polynomial of $t$ over $\F_3$. Then $t\in \F_{3^4}$ is a primitive element and ${\rm Tr}_{\F_{3^4}/\F_{3^2}}(t)=t^9+t=t^3 + t^2\neq 0$. By Theorem \ref{BIII?}, there exists a unique $\beta\in \F_{9}^{\times}$ such that $T^9-T-t$ factors in $\F_{3^4}[T]$ into almost Artin-Schreier primes. Using SAGE,
\begin{align*}
T^9-T-t =&(T^{3} + (2 t^{3} + 2 t^{2} + 2)T + 2 t^{2} + t)(T^{3} + (2 t^{3} + 2 t^{2} + 2)T + t^{3} + t)\\&(T^{3} +(2 t^{3} + 2 t^{2} + 2)T + 2 t^{3} + t^{2} + t).
\end{align*}
Taking another primitive element as $\alpha=t^3 + t + 1\in \F_{3^4}$. We find that ${\rm Tr}_{\F_{3^4}/\F_{3^2}}(t^3+t+1)=0$. By Theorem \ref{BIII?}, the polynomial $T^9-T-(t^3+t+1)$ splits completely in $\F_{3^4}[T]$. Using SAGE, we find that
\begin{align*}
T^9-T-(t^3+t+1)=&(T + t^{2} + 2 t)(T + t^{2} + 2 t + 1)(T + t^{2} + 2 t + 2)\\&(T + t^{3} + 2 t^{2} + 2 t) (T + t^{3} + 2 t^{2} + 2 t + 1)(T + t^{3} + 2 t^{2} + 2 t + 2)\\&(T + 2 t^{3} + 2 t)(T + 2 t^{3} + 2 t + 1)(T + 2 t^{3} + 2 t + 2).
\end{align*}
\end{exa}

There are a number of ways that $\F_q$ acts on $\F_q[T]$ as a subgroup of ${\rm GL}_2(\F_q)$. Of all these, we prefer the translation action $\star$ because it preserves monicity of polynomials and fixes $F_i$'s, (hence the property of being a c-Wieferich prime). For each $\alpha\in \F_q$, we set $\alpha\star f=f(T+\alpha)$ and the stabiliser of $f$ to be the set ${\rm stab}(f):=\{\alpha\in \F_q: \alpha\star f=f\}$. Let $\mathcal{G}$ be a subgroup of $\F_q$ and $f\in \F_q[T]$; we say $f$ is a $\mathcal{G}$-fixed (or a $\mathcal{G}$-translation invariant) polynomial if $\mathcal{G}\subseteq {\rm stab}(f)$. Note that, every $f\in \F_q[T]$ is $\{0\}$-fixed. We also say that, a polynomial $f$ is a maximally $\mathcal{G}$-fixed polynomial if ${\rm stab}(f)=\mathcal{G}$, i.e., amongst the subgroups of $\F_q$ that fix $f$, $\mathcal{G}$ is the ``largest". If a polynomial $f\in \F_q[T]$ is maximally $\{0\}$-fixed, then (with abuse of language), we also call it a non-fixed polynomial in $\F_q[T]$. The set of non-fixed polynomials is non-empty, since $T$ is always a member and the set of maximally $\F_q$-fixed polynomials is also non-empty, since the constant polynomial $1$ is always a member of this set. Moreover, one can easily prove that these sets are infinite.

For each subgroup $\mathcal{G}$ of $\F_q$, we define $[1]_{\mathcal{G}}:=\prod_{\alpha\in \mathcal{G}}(T+\alpha)=\prod_{\alpha\in \mathcal{G}}(T-\alpha)$. It is clear that $[1]_{\mathcal{G}}$ divides $[1]$, where $[1]=T^q-T$. In addition, if $\#\mathcal{G}=p$, then there exists a $\chi\in \F_q^{\times}$ such that $\mathcal{G}=\chi \F_p$ and that $[1]_{\mathcal{G}}=T^p-\beta T$ where $\beta=\chi^{p-1}$. The almost Artin-Schreier prime polynomials in $\F_{q^r}[T]$ (i.e., primes of the form $T^p-\beta T-\gamma$ where $\beta\in \F_q^{\times}$ and $\gamma\in \F_{q^r}$) are by construction $\mathcal{G}$-fixed with $\mathcal{G}=w\F_p$ where $w$ is a solution to the equation $X^{p-1}-\beta=0$. Therefore, any prime polynomial $\mathcal{P}$ in $\mathcal{A}$ that decomposes as a product of such primes with fixed $\beta=\chi^{p-1}$ is a $\mathcal{G}$-fixed prime. This characterization will be used in Theorem \ref{IKL}.

The motivation for studying $\mathcal{G}$-fixed primes in $\F_q[T]$ arose from the following example, (see \cite[Page 356]{ASB2}): consider the field $\F_{3^2}:=\F_3(t)$ where $t$ is such that $t^2+2t+2=0$. Using SAGE, we obtained $T^3+(t+1)T+1$ as one of the c-Wieferich primes in $\F_{3^2}[T]$. Let $\mathcal{G}_1=\{0,1,2\}$ and $\mathcal{G}_2=\{0,t+2,2t+1\}$, (both additive subgroups of the additive group $\F_{3^2}$ each of order $3$). It is easy to check that $T^3+(t+1)T+1$ is invariant under translation by elements of $\mathcal{G}_2$ but not $\mathcal{G}_1$. In this case, we say that $\mathcal{P}$ is a $\mathcal{G}_2$-fixed c-Wieferich prime. 

\begin{pro}
Let $f$ be a monic polynomial in $\F_q[T]$ and $\mathcal{G}$ be a subgroup of $\F_q$. Then $f$ is a $\mathcal{G}$-fixed polynomial if and only if there is some $g\in \F_q[T]$ such that $f=g([1]_{\mathcal{G}})$.
\end{pro}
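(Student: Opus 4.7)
The plan is a standard divide-and-conquer argument: the backward implication is immediate from the translation-invariance of $[1]_{\mathcal{G}}$, and the forward implication reduces to Euclidean division by $[1]_{\mathcal{G}}$ together with a short induction on $\deg f$.

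For the easy direction, I would observe that for any $\alpha\in\mathcal{G}$ the map $\gamma\mapsto \gamma+\alpha$ is a bijection of $\mathcal{G}$, hence
\[
[1]_{\mathcal{G}}(T+\alpha)=\prod_{\gamma\in\mathcal{G}}(T+\alpha+\gamma)=\prod_{\gamma\in\mathcal{G}}(T+\gamma)=[1]_{\mathcal{G}}(T).
\]
So $[1]_{\mathcal{G}}$ is itself $\mathcal{G}$-fixed, and therefore $g([1]_{\mathcal{G}})$ is $\mathcal{G}$-fixed for every $g\in\F_q[T]$, since the translation action $\star$ respects both addition and composition.

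For the forward direction, I would argue by strong induction on $\deg f$. If $f$ is constant, take $g(X)=f$. Otherwise perform Euclidean division by the monic polynomial $[1]_{\mathcal{G}}$ of degree $|\mathcal{G}|$:
\[
f(T)=q(T)\,[1]_{\mathcal{G}}(T)+r(T),\qquad \deg r<|\mathcal{G}|.
\]
Replacing $T$ by $T+\alpha$ for any $\alpha\in\mathcal{G}$ and using the $\mathcal{G}$-invariance of both $f$ and $[1]_{\mathcal{G}}$ yields
\[
q(T)\,[1]_{\mathcal{G}}(T)+r(T)=q(T+\alpha)\,[1]_{\mathcal{G}}(T)+r(T+\alpha).
\]
Because $\deg r(T+\alpha)=\deg r<|\mathcal{G}|$, the uniqueness of Euclidean division forces $q(T+\alpha)=q(T)$ and $r(T+\alpha)=r(T)$, so both $q$ and $r$ are $\mathcal{G}$-fixed. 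Evaluating $r(T+\alpha)=r(T)$ at $T=0$ gives $r(\alpha)=r(0)$ for every $\alpha\in\mathcal{G}$, so $r(T)-r(0)$ has at least $|\mathcal{G}|$ roots but degree strictly less than $|\mathcal{G}|$, hence $r\equiv r(0)$ is a constant $c\in\F_q$.

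Finally, $\deg q<\deg f$, so the inductive hypothesis provides $g_1\in\F_q[T]$ with $q=g_1([1]_{\mathcal{G}})$; setting $g(X):=X\,g_1(X)+c$ then gives $f=g([1]_{\mathcal{G}})$, completing the proof. There is no real obstacle here; the only point that requires care is invoking uniqueness of Euclidean division to transfer $\mathcal{G}$-invariance from $f$ to both the quotient and the remainder, after which the degree-count argument collapses the remainder to a constant.
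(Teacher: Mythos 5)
Your proof is correct, but it follows a genuinely different route from the paper's. The paper argues the forward direction by repeatedly stripping off the constant term: it sets $g_1=f-f(0)$, notes that every translate $T+\alpha$ with $\alpha\in\mathcal{G}$ divides $g_1$ so that $[1]_{\mathcal{G}}$ (in fact a power of it) divides $g_1$ exactly, divides out, and iterates; unwinding the iteration exhibits $f$ as a polynomial in $[1]_{\mathcal{G}}$. You instead perform a single Euclidean division $f=q\,[1]_{\mathcal{G}}+r$ and use \emph{uniqueness} of quotient and remainder to transfer the $\mathcal{G}$-invariance of $f$ and $[1]_{\mathcal{G}}$ to both $q$ and $r$; the degree count then forces $r$ to be constant, and induction on $\deg f$ finishes the job. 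Your version is arguably tidier: the uniqueness-of-division step makes explicit why the quotient is again $\mathcal{G}$-fixed, a point the paper leaves implicit when it says ``repeat the procedure,'' and it also silently establishes the (vacuously needed) fact that there are no nonconstant $\mathcal{G}$-fixed polynomials of degree below $|\mathcal{G}|$. The paper's version, on the other hand, gives slightly more structural information along the way (it tracks the exact multiplicity $s$ with which $[1]_{\mathcal{G}}$ divides $f-f(0)$). Both arguments are elementary and of comparable length; either is acceptable.
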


\begin{proof}
$(\Leftarrow)$ This follows immediately from the fact that $[1]_{\mathcal{G}}$ is $\mathcal{G}$-fixed. $(\Rightarrow)$ For the converse, assume that $f$ is a monic $\mathcal{G}$-fixed polynomial of degree $n$. This  implies that $g_1=f-f(0)$ is also a $\mathcal{G}$-fixed monic polynomial. Moreover, $T$ divides $g_1$ since $g_1(0)=0$. Since $g_1$ is a $\mathcal{G}$-fixed monic polynomial, $g_1$ is divisible by all the translates $T+\alpha$, $\alpha\in \mathcal{G}$ and so, $[1]_{\mathcal{G}}$ divides $g_1$. Put $f_1=[1]_{\mathcal{G}}^{-s}g_1$, where $s$ is the number of times $T$ divides $g_1$, then repeat the procedure. Since $f$ is a polynomial, this procedure terminates (after at most $n$ steps). Looking at the sequence of operations in reverse reveals that $f$ is a monic polynomial in $[1]_{\mathcal{G}}$.
\end{proof}
From \cite[Theorem 7]{REIS20181087}, it follows that there are no primes $\mathcal{P}$ in $\mathcal{A}$ whose stabiliser has order $>p$, the characteristic of $\F_q$. So if $f$ is a prime in $\mathcal{A}$, then $f$ is fixed by a subgroup of $\F_q$ of order at most $p$. In the subsequent discussions, the phrase ``$\mathcal{G}$-fixed prime polynomials" implies that $\mathcal{G}$ is a subgroup of $\F_q$ of order at most $p$. 

\begin{thm}\label{LOKI}
Let $\mathcal{G}$ be a subgroup of the additive group of $\F_q$ of order $p$ and $f$ be a $\mathcal{G}$-fixed prime polynomial in $\mathcal{A}$ of degree $ps$. Then $f$ factors into a product of almost Artin-Schreier $\mathcal{G}$-fixed primes in $\F_{q^s}[T]$.
\end{thm}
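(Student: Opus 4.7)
The plan is to promote the preceding Proposition's structural description of $\mathcal{G}$-fixed polynomials to the irreducible setting and then factor over $\F_{q^s}$. Since $\mathcal{G}$ is an additive subgroup of $\F_q$ of order $p$, I may write $\mathcal{G} = \chi\F_p$ for some $\chi \in \F_q^\times$, in which case $[1]_{\mathcal{G}} = T^p - \beta T$ with $\beta = \chi^{p-1}$. Applying the preceding Proposition to the monic prime $f$ produces a monic polynomial $g \in \F_q[T]$ with $f(T) = g(T^p - \beta T)$, and comparing degrees forces $\deg g = s$.

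Next I would verify that $g$ is itself irreducible over $\F_q$. For if $g = g_1 g_2$ with $\deg g_i \geqslant 1$, then $f = g_1(T^p - \beta T) \cdot g_2(T^p - \beta T)$ is a nontrivial factorization in $\F_q[T]$ (each factor has degree at least $p \geqslant 2$), contradicting primality of $f$. Being irreducible of degree $s$ over $\F_q$, $g$ splits completely over its splitting field $\F_{q^s}$ as $g(Y) = \prod_{i=1}^s (Y - \delta_i)$ with distinct $\delta_i \in \F_{q^s}$, and substituting $Y = T^p - \beta T$ gives
\begin{equation*}
f(T) \;=\; \prod_{i=1}^{s} (T^p - \beta T - \delta_i) \quad \text{in } \F_{q^s}[T].
\end{equation*}

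It then remains to verify that each factor of this decomposition has the desired properties. By inspection, $T^p - \beta T - \delta_i$ has $\beta \in \F_q^\times$ and $\delta_i \in \F_{q^s}$, hence is an almost Artin-Schreier polynomial in the sense defined after Theorem~\ref{CII?}; and it is $\mathcal{G}$-fixed by the remark preceding the statement, because $\mathcal{G} = \chi\F_p$ with $\chi$ a root of $X^{p-1} - \beta = 0$. For irreducibility of each factor over $\F_{q^s}$, the standard fact that an irreducible polynomial of degree $ps$ over $\F_q$ factors over $\F_{q^s}$ into $\gcd(ps,s) = s$ irreducible pieces of equal degree $ps/s = p$ shows that each $T^p - \beta T - \delta_i$, already of degree $p$, must be irreducible. (Equivalently, if $\alpha$ is a root of a given factor then $\F_q(\alpha) = \F_{q^{ps}}$ contains $\F_{q^s}$, so $[\F_{q^s}(\alpha):\F_{q^s}]=p$.)

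I do not anticipate a significant obstacle: the Proposition above does the structural work, and the remaining arguments are degree bookkeeping combined with the standard decomposition of finite-field extensions. The only delicate point is recognising that the same $\beta = \chi^{p-1}$ governs both the equation $X^{p-1}-\beta = 0$ that defines $\mathcal{G}$-fixedness of an almost Artin-Schreier prime and the shape of the factors produced by the decomposition $f = g([1]_{\mathcal{G}})$.
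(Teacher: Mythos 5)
Your proposal is correct, and it takes a genuinely different route from the paper's proof. The paper argues root-by-root: it picks a root $\zeta$ of $f$, forms the orbit polynomial $g=\prod_{a\in\mathcal{G}}\bigl(T-(\zeta+a)\bigr)=T^p-\chi^{p-1}T-(\zeta^p-\chi^{p-1}\zeta)$, shows via the Frobenius orbit of $\zeta$ (there is $t$ with $\zeta^{q^t}=\zeta+\chi$, whence $\zeta^{q^{pt}}=\zeta$) and Galois theory that $g$ has coefficients in $\F_{q^s}$, and then invokes Theorem~\ref{CII?} to conclude that $g$ is irreducible because it has no root in $\F_{q^s}$. You instead promote the Proposition $f=g([1]_{\mathcal{G}})$ to the irreducible setting, observe that the degree-$s$ polynomial $g$ must itself be irreducible over $\F_q$, split it over $\F_{q^s}$, and substitute back to get the entire factorization $f=\prod_{i=1}^s(T^p-\beta T-\delta_i)$ in one step; irreducibility of each degree-$p$ factor then comes for free from the standard fact that an irreducible polynomial of degree $ps$ over $\F_q$ breaks into $\gcd(ps,s)=s$ conjugate irreducible factors of degree $p$ over $\F_{q^s}$, so Theorem~\ref{CII?} is not needed at all. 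Your version is arguably cleaner and makes the role of $\beta=\chi^{p-1}$ more transparent, and since the $\delta_i$ are the Frobenius conjugates $\delta^{q^i}$ of a degree-$s$ element, it still delivers the precise shape $f=\prod_{i=1}^s(T^p-\chi^{p-1}T-\gamma^{q^i})$ that the paper relies on later in Theorem~\ref{IKL}. What the paper's root-theoretic computation buys in exchange is the explicit formula $\gamma=\zeta^p-\chi^{p-1}\zeta=\mathcal{N}_{\F_{q^{ps}}/\F_{q^s}}(\zeta)$ for the constant term in terms of a root of $f$, which is reused in the norm argument inside the proof of Theorem~\ref{IKL}.
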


\begin{proof}
From \cite[Theorem 2.7(b)]{REIS20181087}, it follows that the degree of $f$ is divisible by $p$, say equal to $ps$. Since $f$ is irreducible over $\F_q$ with $\zeta$ as one of its roots, it follows that $\F_{q^{ps}}=\F_q(\zeta)$ is the splitting field of $f$ over $\F_q$. Since $f$ is a $\mathcal{G}$-fixed polynomial in $\mathcal{A}$, for each root $\zeta\in\F_{q^{ps}}$ of $f$, the element $\zeta+a$ is also a root of $f$ for any $a\in \mathcal{G}$. Define $g:=\prod_{a\in \mathcal{G}}(T-(\zeta+a))$. Since $\mathcal{G}$ is a subgroup of $\F_q$ of order $p$, it follows that $\mathcal{G}=\chi \F_p$ for some $\chi\in \F_q^{\times}$. Since $f$ is irreducible over $\F_q$, and $\zeta+\chi$ is one of its roots, it follows that there exists a $t\in \Z_{\geqslant 1}$ such that $\zeta^{q^t}=\zeta+\chi$ and so $\zeta^{q^{pt}}=(\zeta+\chi)^{q^{(p-1)t}}=\zeta^{q^{(p-1)t}}+\chi=\zeta^{q^{(p-2)t}}+2\chi=\cdots=\zeta$. Galois theory then tells us that $g\in \F_{q^s}[T]$ and that $g=\prod_{a\in \mathcal{G}}(T-(\zeta+a))=(T-\zeta)\prod_{i\in\F_p^{\times}}(T-\zeta-i\chi)=T^p-\chi^{p-1}T-(\zeta^p-\chi^{p-1}\zeta)$, a $\mathcal{G}$-fixed polynomial. Since $g$ has no root in $\F_{q^s}$, Theorem \ref{CII?} tells us that $g$ is irreducible over $\F_{q^s}$ of almost Artin-Schreier type in $\F_{q^s}[T]$. So $f$ decomposes into almost Artin-Schreier $\mathcal{G}$-fixed primes in $\F_{q^s}[T]$.
\end{proof}

\section{Criterion for c-Wieferich primes in $\F_q[T]$}\label{C}
In this section, we let $\mathcal{G}$ be an additive subgroup of $\F_q$ of order at-most $p$ and derive two criteria used to compute $\mathcal{G}$-fixed c-Wieferich primes in $\mathcal{A}$. Before, we do this, let us first state and prove Lemma \ref{ME} which will be useful at a few places in the deduction of the main results of this section, i.e., Theorems \ref{IKL} and \ref{IKL2}.
\begin{lem}\label{ME}
Let $s,\ell\in \Z_{\geqslant 0}$ and $\mathcal{Q}_d$ be a degree $d$ prime in $\mathcal{A}$ and  $0\leqslant \ell <d$. If $F_i$ is the polynomial defined recursively as $F_0=1$ and $F_{i}=(-1)^i+[i]F_{i-1}$, for $i\in\Z_{\geqslant 1}$, then $F_{sd+\ell}\equiv (-1)^{sd}F_{\ell}(\bmod\;\mathcal{Q}_d)$.
\end{lem}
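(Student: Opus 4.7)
The plan is to reduce the recurrence modulo $\mathcal{Q}_d$ and then induct on $i=sd+\ell$. The essential input is that $[i]=T^{q^i}-T$ is periodic of period $d$ modulo $\mathcal{Q}_d$: the residue $\bar T\in \mathcal{A}/\mathcal{Q}_d\cong \F_{q^d}$ satisfies $\bar T^{q^d}=\bar T$, so $[i]\equiv [i\bmod d]\pmod{\mathcal{Q}_d}$, and in particular $[i]\equiv 0\pmod{\mathcal{Q}_d}$ whenever $i\geqslant 1$ is a multiple of $d$.

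With this in hand, I would induct on $i\geqslant 0$, writing $i=sd+\ell$ uniquely with $0\leqslant \ell <d$. The base case $i=0$ is immediate since $F_0=1=(-1)^0F_0$. For the inductive step, I would split on whether $\ell\geqslant 1$ or $\ell=0$. In the first case, $i-1=sd+(\ell-1)$ is again of the required form, so the induction hypothesis gives $F_{i-1}\equiv (-1)^{sd}F_{\ell-1}\pmod{\mathcal{Q}_d}$. Plugging this into $F_i=(-1)^i+[i]F_{i-1}$ and using $[i]\equiv[\ell]\pmod{\mathcal{Q}_d}$ yields
\[
F_i\equiv (-1)^{sd+\ell}+[\ell](-1)^{sd}F_{\ell-1}=(-1)^{sd}\bigl((-1)^{\ell}+[\ell]F_{\ell-1}\bigr)=(-1)^{sd}F_{\ell}\pmod{\mathcal{Q}_d},
\]
which is the desired congruence. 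In the second case ($\ell=0$ and $s\geqslant 1$), we have $d\mid i$, so $[i]\equiv 0\pmod{\mathcal{Q}_d}$ and the recurrence collapses to $F_i\equiv (-1)^{sd}=(-1)^{sd}F_0\pmod{\mathcal{Q}_d}$, with the contribution of $F_{i-1}$ simply absorbed.

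I do not anticipate a serious obstacle. The only point at which the argument might feel delicate is the transition from $i=sd-1$ to $i=sd$, where the prefactor jumps from $(-1)^{(s-1)d}$ to $(-1)^{sd}$; but this is precisely where $[i]$ vanishes modulo $\mathcal{Q}_d$, so the old multiplier is erased together with $F_{i-1}$ and the update is automatic.
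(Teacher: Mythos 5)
Your proof is correct and follows essentially the same route as the paper's: both rest on the periodicity $[i]\equiv[\ell]\pmod{\mathcal{Q}_d}$ (in particular $[sd]\equiv 0$) and then run the induction, with the paper taking $F_{sd}\equiv(-1)^{sd}F_0$ as the base case exactly as in your $\ell=0$ step. The only cosmetic difference is that the paper derives the periodicity from the identity $[sd+\ell]=[sd]^{q^{\ell}}+[\ell]$ rather than from $\bar T^{q^d}=\bar T$ in $\mathcal{A}/\mathcal{Q}_d$, and organizes the induction on $\ell$ with $s$ fixed instead of on $i=sd+\ell$.
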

\begin{proof}
We have that $[sd+\ell]=[sd]^{q^{\ell}}+[\ell]\equiv [\ell](\bmod\;\mathcal{Q}_d)$, where $\mathcal{Q}_d$ is a degree $d$ prime polynomial in $\mathcal{A}$. We proceed by induction on $\ell$ with the base case $F_{sd}=(-1)^{sd}+[sd]F_{sd-1}\equiv (-1)^{sd}F_0(\bmod\;\mathcal{Q}_d)$. Then for $\ell>0$, by induction $F_{sd+\ell}=(-1)^{sd+\ell}+[sd+\ell]F_{sd+\ell-1}\equiv (-1)^{sd+\ell}+[\ell](-1)^{sd}F_{\ell-1}=(-1)^{sd}F_{\ell}(\bmod\;\mathcal{Q}_d)$.
\end{proof}  
\begin{defn}
Let $\chi\in \F_q$, $s\in \Z_{\geqslant 1}$ and $B_{q,s,\chi}$ be the set of equivalence classes of $\alpha$'s of degree $s$ over $\F_q$, $(\theta_1$ is related to $\theta_2$ if $\theta_1^{q^i}=\theta_2$ for some $i\in\Z_{\geqslant 0})$ such that ${\rm Tr}_{\F_{q^s}/\F_q}(\alpha)=\chi$ and $F_{ps-1}\equiv 0(\bmod\;T^q-T-\alpha)$. 
\end{defn}
We will often write $\alpha\in B_{q,s,\chi}$ when referring to any representative of the equivalence class of $\alpha$ in $B_{q,s,\chi}$.

\begin{thm}\label{IKL}
For $q\geqslant 3$, $s\geqslant 1$, and $\chi\in \F_q^{\times}$, let $\mathcal{G}=\chi\F_p$. Then the $\mathcal{G}$-fixed c-Wieferich primes of degree $ps$ are precisely the prime factors over $\F_q$ of $R_{q,s,\alpha}=\prod_{i=1}^s(T^q-T-\alpha^{q^i})$ for $\alpha\in B_{q,s,\chi}$. Moreover, these factors are distinct so the number of $\mathcal{G}$-fixed c-Wieferich primes is $p^{-1}q|B_{q,s,\chi}|$.
\end{thm}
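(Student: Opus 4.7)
The plan is to establish the claimed correspondence using the trinomial factorisations from Theorems \ref{LOKI} and \ref{BIII?}. By Proposition \ref{ASB2}, a degree-$ps$ prime $\mathcal{P}$ is c-Wieferich iff $\mathcal{P}\mid F_{ps-1}$. Theorem \ref{LOKI} writes a $\mathcal{G}$-fixed such $\mathcal{P}$ over $\F_{q^s}$ as the product of $s$ Galois conjugates of a trinomial $T^p-\chi^{p-1}T-\gamma$, while Theorem \ref{BIII?} writes $T^q-T-\alpha$ (with ${\rm Tr}_{\F_{q^s}/\F_q}(\alpha)=\chi$) over $\F_{q^s}$ as a product of $p^{-1}q$ trinomials of the same shape. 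A key observation used throughout is that each $F_n$ is $\F_q$-translation invariant: the identity $[i](T+a)=[i](T)$ for $a\in\F_q$ propagates through the recursion $F_n=(-1)^n+[n]F_{n-1}$ to give $F_n(T+a)=F_n(T)$.

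For $(\Leftarrow)$: given $\alpha\in B_{q,s,\chi}$, Theorem \ref{BIII?} lets $T^q-T-\alpha$ split over $\F_{q^s}$ into $p^{-1}q$ distinct $\mathcal{G}$-fixed trinomials, so every prime factor of $R_{q,s,\alpha}$ over $\F_q$ is $\mathcal{G}$-fixed. Since $F_{ps-1}\in\F_q[T]$ is Frobenius-invariant and $T^q-T-\alpha\mid F_{ps-1}$ by definition of $B_{q,s,\chi}$, every Frobenius conjugate $T^q-T-\alpha^{q^i}$ also divides $F_{ps-1}$, hence $R_{q,s,\alpha}\mid F_{ps-1}$, and the prime factors of $R_{q,s,\alpha}$ are c-Wieferich. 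A short degree computation (using that $[\F_q(\theta):\F_q(\alpha)]=p$ whenever $T^q-T-\alpha$ has no root in $\F_{q^s}$) shows that the trinomial constants $\gamma_k$ from Theorem \ref{BIII?} have the same degree $s$ over $\F_q$ as $\alpha$, so the ${\rm Gal}(\F_{q^s}/\F_q)$-orbits on the trinomial factors all have length $s$, making each $\F_q$-irreducible factor of $R_{q,s,\alpha}$ of degree exactly $ps$.

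For $(\Rightarrow)$: given a $\mathcal{G}$-fixed c-Wieferich prime $\mathcal{P}$ of degree $ps$, Theorem \ref{LOKI} writes $\mathcal{P}=\prod_{i=0}^{s-1}(T^p-\chi^{p-1}T-\gamma^{q^i})$ over $\F_{q^s}$ with $\gamma$ of degree $s$ over $\F_q$. Pick a root $\theta$ of the first trinomial and set $\alpha:=\theta^q-\theta\in\F_{q^s}$; the Galois action of $\F_{q^{ps}}/\F_{q^s}$ puts ${\rm Tr}_{\F_{q^s}/\F_q}(\alpha)=\theta^{q^s}-\theta$ in $\mathcal{G}\setminus\{0\}$, and after selecting the appropriate generator we take this element to be $\chi$. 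To see $T^q-T-\alpha\mid F_{ps-1}$: $\theta$ is a root of $F_{ps-1}$ since $\mathcal{P}\mid F_{ps-1}$, so by $\F_q$-translation invariance every $\theta+a$ with $a\in\F_q$ is also a root of $F_{ps-1}$; but $\{\theta+a:a\in\F_q\}$ is exactly the set of (distinct) roots of the separable polynomial $T^q-T-\alpha$, giving the divisibility. Thus $\alpha\in B_{q,s,\chi}$ and $\mathcal{P}\mid R_{q,s,\alpha}$. For the count, $R_{q,s,\alpha}$ has degree $qs$ and, by the $(\Leftarrow)$ analysis, $p^{-1}q$ prime factors of degree $ps$; since $\alpha$ is recovered from any such prime factor via $\theta^q-\theta$, distinct Frobenius-orbits of $\alpha$ produce disjoint families of primes, yielding the total $p^{-1}q|B_{q,s,\chi}|$. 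The main delicate points are the $\F_q$-translation-invariance step, which upgrades the weak divisibility $\mathcal{P}\mid F_{ps-1}$ (accounting for only $p$ of the roots) to the stronger $T^q-T-\alpha\mid F_{ps-1}$ (all $q$ roots), and the bookkeeping needed to pin ${\rm Tr}(\alpha)$ to the specific generator $\chi$ of $\mathcal{G}$ among its $p-1$ nonzero elements.
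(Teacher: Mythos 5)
Your proof is correct and follows essentially the same route as the paper's: decompose via Theorems \ref{LOKI} and \ref{BIII?}, pass between a root $\theta$ of $\mathcal{P}$ and $\alpha=\theta^q-\theta$, use the $\F_q$-translation invariance of $F_{ps-1}$ to upgrade $\mathcal{P}\mid F_{ps-1}$ to $T^q-T-\alpha\mid F_{ps-1}$, and count via the Frobenius-orbit structure of the $R_{q,s,\alpha}$. The only minor variation is that you place ${\rm Tr}_{\F_{q^s}/\F_q}(\alpha)=\theta^{q^s}-\theta$ in $\mathcal{G}$ by observing that $\theta^{q^s}$ is a root of the same trinomial, whereas the paper computes $\mathcal{N}_{\F_{q^{ps}}/\F_{q^s}}(\zeta)=\zeta^p-w^{p-1}\zeta$ to deduce $w^{p-1}=\chi^{p-1}$; both arguments (yours and the paper's) pin ${\rm Tr}(\alpha)$ down only to a generator of $\mathcal{G}$ rather than to $\chi$ itself, a bookkeeping point you at least flag explicitly.
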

\begin{proof}
Let $\mathcal{G}=\chi \F_p$ and $f$ be a $\mathcal{G}$-fixed c-Wieferich prime in $\F_q[T]$ of degree $ps$. By Theorem \ref{LOKI}, we have $f=\prod_{i=1}^s(T^p-\chi^{p-1}T-\gamma^{q^i})$ for some $\gamma\in\F_{q^s}$ of degree $s$ over $\F_q$. Since $f$ is a prime polynomial in $\F_q[T]$ of degree $ps$, it follows that $g:=T^p-\chi^{p-1}T-\gamma^{q^j}$ is irreducible over $\F_{q^s}$. Let $\zeta$ be a root of $g$ (so $\zeta$ will have degree $ps$ over $\F_q$) and $h=\prod_{\varepsilon\in \F_q}(T-(\zeta+\varepsilon))$. Then $h=T^q-T-\alpha$, where $\alpha=\prod_{\varepsilon\in \F_q}(\zeta+\varepsilon)$. If $\zeta^{q^k}=\zeta+\varepsilon$ for some $\varepsilon\in \F_q$, then it follows that $\zeta^{q^{kp}}=\zeta$ which implies that $s$ divides $k$. It follows that $\alpha$ has degree $s$ over $\F_q$, since whenever $\zeta^{q^k}$ is a root of $h$ for some $k>0$, then $s$ divides $k$. Since $\alpha\in \F_{q^s}$, this implies that $\zeta^{q^s}=\zeta+w$, where $w={\rm Tr}_{\F_{q^{s}}/\F_{q}}(\alpha)$. We first show that $w\F_p=\mathcal{G}$. For any $i=0,1,\ldots, p-1$, we have $\zeta^{q^{is}}=\zeta+iw$. Since $g$ is the minimum polynomial of $\zeta$ over $\F_{q^s}$, we have $\gamma^{q^j}=\mathcal{N}_{\F_{q^{ps}}/\F_{q^s}}(\zeta)=\zeta\zeta^{q^s}\cdots \zeta^{q^{s(p-1)}}=\zeta(\zeta+w)\cdots(\zeta+(p-1)w)=\zeta^p-w^{p-1}\zeta$, i.e., $\zeta^p-w^{p-1}\zeta -\gamma^{q^j}=0$. Since $\zeta^p-\chi^{p-1}\zeta=0$, it follows that $w=a\chi$ for some $a\in\F_p^{\times}$ and hence $w\F_p=a\chi \F_p=\chi\F_p=\mathcal{G}$. Now, since $f(\zeta)=0$, we have $F_{ps-1}(\zeta)=0$. This together with the fact $F_{ps-1}$ is $\F_q$-fixed, we obtain $F_{ps-1}(\zeta+\varepsilon)=0$ for any $\varepsilon\in\F_q$ and hence $F_{ps-1}\equiv 0(\bmod\;h)$. With these two results, we can conclude that $\alpha\in B_{q,s,\chi}$ and that $f$ divides $R_{q,s,\alpha}$. Say now that $f$ is a prime factor in $\F_q[T]$ of $R_{q,s,\alpha}$ for some $\alpha\in B_{s,q,\chi}$. Since $\alpha$ has degree $s$ over $\F_q$, it follows that $f$ has degree $ps$ and $f$ will be a $\mathcal{G}$-fixed polynomial by Theorem \ref{BIII?}. By assumption, $F_{ps-1}\equiv 0(\bmod\;R_{q,s,\alpha})$ and so $f$ is a $\mathcal{G}$-fixed c-Wieferich prime of degree $ps$. This finishes the proof of the first statement. If $R_{q,s,\alpha}$ and $R_{q,s,\beta}$ are not coprime (share a root), then $R_{q,s,\alpha}= R_{q,s,\beta}$ because fixing any root $\zeta$ of $R_{q,s,\alpha}$ or $R_{q,s,\beta}$, we can get all the roots by taking $\zeta^{q^i}+\varepsilon$ with $\varepsilon\in \F_q$ and $i=1,\ldots,s$. Since $R_{q,s,\alpha}= R_{q,s,\beta}$ if and only if $\alpha=\beta^{q^i}$ for some $i=1,\ldots,s$, the last statement follows.
\end{proof}

Observe that if $\mathcal{P}$ is a $\mathcal{G}$-fixed prime over $\F_q$ of degree $ps$, then so is $\mathcal{Q}=\mathcal{P}(T+\zeta)$ for any $\zeta\in \F_q-\mathcal{G}$. This is because, for any $\chi\in \mathcal{G}$, we have $\mathcal{Q}(T+\chi)=\mathcal{P}(T+\zeta+\chi)=\mathcal{P}(S+\chi)=\mathcal{P}(S)=\mathcal{P}(T+\zeta)=\mathcal{Q}$, hence $\mathcal{Q}$ is another $\mathcal{G}$-fixed prime. It is clear that $\mathcal{Q}\neq \mathcal{P}$, since $\mathcal{P}$ is a $\mathcal{G}$-fixed prime, hence $\mathcal{P}$ and $\mathcal{Q}$ are in different orbits. Moreover, $\mathcal{Q}$ is unique up to translation by an element of $\mathcal{G}$. So there are $p^{-1}q=\#(\F_q/\mathcal{G})$ possible choices of such $\mathcal{G}$-fixed primes. In other words, if $\mathcal{S}$ is the set of $\mathcal{G}$-fixed c-Wieferich primes of degree $ps$ over $\F_q$, then the other $\frac{q-1}{p-1}-1$ subgroups of $\F_q$ each of order $p$ permute these $\#\mathcal{S}$ distinct c-Wieferich primes. 

We now state the second criterion that characterizes non-fixed c-Wieferich primes in $\F_q[T]$.
\begin{thm}\label{IKL2}
Let $q\geqslant 3$ and $s\geqslant 1$. The non-fixed c-Wieferich primes in $\F_q[T]$ of degree $s$ are precisely the prime factors over $\F_q$ of $R_{q,s,\alpha}=\prod_{i=1}^s(T^q-T-\alpha^{q^i})$ for $\alpha\in B_{q,s,0}$.
\end{thm}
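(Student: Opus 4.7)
The plan is to model the proof on Theorem \ref{IKL} but with trivial stabilizer and degree $s$ in place of the $\chi\F_p$-stabilizer and degree $ps$; in particular the almost Artin-Schreier decomposition of Theorem \ref{LOKI} is no longer available (and not needed), but the rest of the argument structure carries over. For the forward direction, let $f$ be a non-fixed c-Wieferich prime of degree $s$ over $\F_q$ and pick a root $\zeta \in \F_{q^s}$. Form the $\F_q$-orbit product
\[ h \;:=\; \prod_{\varepsilon\in\F_q}(T-\zeta-\varepsilon) \;=\; T^q - T - \alpha, \]
where comparing constant terms with the identity $\prod_{\varepsilon\in\F_q}(\zeta+\varepsilon) = \zeta^q - \zeta$ identifies $\alpha = \zeta^q - \zeta$. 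The trace condition follows from the telescoping
\[ {\rm Tr}_{\F_{q^s}/\F_q}(\alpha) \;=\; \sum_{j=0}^{s-1}\bigl(\zeta^{q^{j+1}} - \zeta^{q^j}\bigr) \;=\; \zeta^{q^s}-\zeta \;=\; 0. \]

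I next show $\alpha$ has degree exactly $s$ over $\F_q$. If $\alpha \in \F_{q^t}$ with $t$ a proper divisor of $s$, then $\alpha^{q^t} = \alpha$ rearranges to $(\zeta^{q^t}-\zeta)^q = \zeta^{q^t}-\zeta$, so $\zeta^{q^t}-\zeta = \varepsilon \in \F_q$; but $\zeta^{q^t}$ is a Galois conjugate of $\zeta$, hence a root of $f$, so the case $\varepsilon \neq 0$ violates non-fixedness of $f$, while $\varepsilon = 0$ forces $\zeta \in \F_{q^t}$, contradicting $\deg_{\F_q}(\zeta) = s$. For the congruence condition $F_{ps-1} \equiv 0 \pmod h$, Proposition \ref{ASB2} gives $F_{s-1}(\zeta) = 0$; Lemma \ref{ME} with $d=s$ and $\ell = s-1$ yields $F_{ps-1} \equiv (-1)^{(p-1)s} F_{s-1} \pmod f$, hence $F_{ps-1}(\zeta) = 0$; and the $\F_q$-translation invariance of each $[i]$ (hence of $F_{ps-1}$) extends this to every root $\zeta+\varepsilon$ of $h$, so $h \mid F_{ps-1}$. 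Thus $\alpha \in B_{q,s,0}$, and since each conjugate $\zeta^{q^k}$ satisfies $(\zeta^{q^k})^q - \zeta^{q^k} = \alpha^{q^k}$, the prime $f$ divides $R_{q,s,\alpha}$.

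For the converse, fix $\alpha \in B_{q,s,0}$ and let $f$ be a prime factor over $\F_q$ of $R_{q,s,\alpha}$, with a root $\zeta$ satisfying $\zeta^q - \zeta = \alpha^{q^i}$. Theorem \ref{BIII?} places $\zeta \in \F_{q^s}$ so $\deg f \mid s$, while $\alpha \in \F_q(\zeta)$ (because $\alpha^{q^i} = \zeta^q - \zeta$ lies in the Frobenius-stable field $\F_q(\zeta)$) together with $\deg_{\F_q}\alpha = s$ gives $s \mid \deg f$; hence $\deg f = s$. If the stabilizer of $f$ contained some $w \in \F_q^\times$, then $\zeta + w$ would be a root of $f$, so $\zeta + w = \zeta^{q^k}$ for some $0 \leq k < s$; equating $(\zeta+w)^q - (\zeta+w) = \alpha^{q^i}$ with $\zeta^{q^{k+1}} - \zeta^{q^k} = \alpha^{q^{i+k}}$ yields $\alpha = \alpha^{q^k}$, forcing $s \mid k$, hence $k = w = 0$, a contradiction. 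For the c-Wieferich property, the root set of $F_{ps-1} \in \F_q[T]$ is closed under Frobenius, so $T^q-T-\alpha \mid F_{ps-1}$ propagates to $T^q-T-\alpha^{q^i} \mid F_{ps-1}$ for each $i$; pairwise coprimality of these $s$ factors gives $R_{q,s,\alpha} \mid F_{ps-1}$, whence $f \mid F_{ps-1}$, and Lemma \ref{ME} reduces this mod $f$ to $F_{s-1} \equiv 0 \pmod f$. Proposition \ref{ASB2} then concludes.

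The main obstacle is the tight interplay between the Frobenius action $\zeta \mapsto \zeta^q$ (governing Galois conjugacy, and hence irreducibility) and the translation action $\zeta \mapsto \zeta + w$ (governing the stabilizer). Exploiting non-fixedness—both in establishing $\deg \alpha = s$ in the forward direction and in ruling out a nontrivial stabilizer of $f$ in the converse—reduces to producing a relation of the shape $\zeta^{q^k} = \zeta + w$ and extracting a contradiction from it. Bridging the index gap between $F_{ps-1}$ (appearing in the definition of $B_{q,s,0}$) and $F_{s-1}$ (appearing in Proposition \ref{ASB2}) via Lemma \ref{ME} is a secondary but routine delicate point.
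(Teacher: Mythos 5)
Your proof is correct and follows essentially the same route as the paper's: form the translation orbit $h=T^q-T-\alpha$ of a root $\zeta$, verify the trace, degree, and congruence conditions to place $\alpha$ in $B_{q,s,0}$, and reverse the steps for the converse. You are in fact more careful than the paper at the points it glosses over: the paper's proof works with $F_{s-1}$ throughout even though $B_{q,s,0}$ is defined via $F_{ps-1}$ (you bridge this index gap correctly with Lemma \ref{ME}), and it asserts without argument that $\alpha$ has degree $s$ and that the prime factors of $R_{q,s,\alpha}$ are non-fixed of degree $s$, both of which you prove in full.
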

\begin{proof}
Let $f$ be a non-fixed c-Wieferich prime in $\F_q[T]$ of degree $s$. Then $f=\prod_{i=1}^s(T-\gamma^{q^i})$ for some $\gamma\in\F_{q^s}$ of degree $s$ over $\F_q$. Let $\zeta=\gamma^{q^i}$ for one of the $i$'s, $g=T-\zeta$ and define $h=\prod_{\varepsilon\in \F_q}(T-(\zeta+\varepsilon))$. Then $h=T^q-T-\alpha$, where $\alpha=\prod_{\varepsilon\in \F_q}(\zeta+\varepsilon)$. Since $g$ has degree $1$ over $\F_{q^s}$ and divides $h$, it follows by Theorem \ref{BIII?} that ${\rm  Tr}_{\F_{q^s}/\F_q}(\alpha)=0$. It follows that $\alpha$ has degree $s$ over $\F_q$. Since $F_{s-1}(\zeta)=0$ and $F_{s-1}$ is $\F_q$-fixed, it follows that $F_{s-1}(\zeta+\varepsilon)=0$ and hence $F_{s-1}\equiv 0(\bmod\;h)$. We conclude that $\alpha\in B_{q,s,0}$ and that $f$ divides $R_{q,s,\alpha}$. Say now that $f$ is a prime factor in $\F_q[T]$ of $R_{q,s,\alpha}$ for some $\alpha\in B_{s,q,0}$. Since $\alpha$ has degree $s$ over $\F_q$, it follows that $f$ has degree $s$ and $f$ will be a non-fixed polynomial. By hypothesis, $F_{s-1}\equiv 0(\bmod\;T^q-T-\alpha)$ hence $F_{s-1}\equiv 0(\bmod\;R_{q,s,\alpha})$ and so $f$ is a non-fixed c-Wieferich prime. 
\end{proof}

In Table \ref{eval_table2}, we shall see an instance of Theorem \ref{IKL2} in the case where $q=2^4$ and $s=3$.

If there exists some $\mathcal{G}$-fixed c-Wieferich prime $\mathcal{P}\in \F_q[T]$ of degree $pn$, then Theorem \ref{IKL} guarantees existence of an $\alpha\in \F_{q^{n}}$ of degree $n$ over $\F_{q}$ and ${\rm Tr}_{\F_{q^{n}}/\F_q}(\alpha)\neq 0$ such that $F_{pn-1}\equiv 0(\bmod\;T^q-T-\alpha)$. Since $\alpha\in \F_{q^{n}}$, and $[\F_{q^{pn}}:\F_{q^{n}}]=p$, it follows that ${\rm Tr}_{\F_{q^{pn}}/\F_q}(\alpha) = 0$. Therefore, although $\alpha$ may no longer be of degree $pn$ over $\F_{q}$, the congruence relation $F_{pn-1}\equiv 0(\bmod\; T^q-T-\alpha)$ still holds in $\F_{q^{pn}}$. In particular, if the condition in Theorem \ref{IKL2} that $\alpha$ is of degree $s$ over $\F_q$ is relaxed, then Theorem \ref{IKL2} can be used to yield both $\mathcal{G}$-fixed and non-fixed c-Wieferich primes in $\mathcal{A}$. For example, using Theorem \ref{IKL} with $q=3^2$ and $s=1$ yields the same c-Wieferich primes as using Theorem \ref{IKL2} with $q=3^2$ and $s=3$ (with the relaxed condition). 

\section{Computing $\mathcal{G}$-fixed c-Wieferich primes in $\F_{q^{r}}[T]$}\label{D}
We use Theorem \ref{IKL} to develop Algorithm \ref{ALG0-} used to compute $\mathcal{G}$-fixed c-Wieferich primes in $\F_q[T]$, where $\mathcal{G}$ is any subgroup of $\F_q$ of order $p$. For each $i\in \Z_{\geqslant 1}$, we have $[i]=[1]^{q^{i-1}}+\cdots+[1]\equiv \alpha^{q^{i-1}}+\cdots+\alpha(\bmod\;[1]-\alpha)$ over $\F_{q^s}$. In particular, if $q=p$, and $\alpha\in \F_q^{\times}$, then $[1]-\alpha$ is an  Artin-Schreier prime in $\F_q[T]$. Moreover, $[n]\equiv n\alpha(\bmod\; [1]-\alpha)$ for each $n\in \Z_{\geqslant 1}$. This fits well with results described in \cite[Section 5]{ASB2}. 

\begin{algorithm}[ht]
\caption{\; Computing translation invariant c-Wieferich primes in $\F_q[T]$}
\label{ALG0-}
\begin{enumerate}
\item[] \textbf{Input}: $p$ - characteristic of the prime field, $\ell$ - extension degree, $q=p^{\ell}$, $\mathscr{B}$ - a list of elements in $\F_{q^s}$ of degree $s$ over $\F_q$. 
\item[] \textbf{Output}: $\mathscr{F}$- a list of translation invariant c-Wieferich primes in $\F_q[T]$ of degree $ps$.

1. $\mathscr{F}$ and $\mathcal{W}$ are empty lists.

2. for $\alpha$ in $\mathscr{B}$ 

\hspace{1cm} $\beta\longleftarrow \alpha$, $F\longleftarrow -1+\alpha$

\hspace{1cm} for $i=2$ to $ps-2$ 

\hspace{2cm} $\alpha\longleftarrow \alpha^q$, $\beta\longleftarrow \beta + \alpha$, $F\longleftarrow (-1)^i+\beta F$

\hspace{1cm} if $F=0$

\hspace{2cm} $\mathcal{W}\longleftarrow \alpha$ 

3. $w\longleftarrow 1$

4. for $i=1$ to size of $\mathcal{W}$

\hspace{1cm} $w\longleftarrow (T^q-T-\mathcal{W}_i)w$, // $\mathcal{W}_i$ is the $i$th element in $\mathcal{W}$ 

5. $\mathscr{F}\longleftarrow$ prime factors of $w$ as an element in $\F_q[T]$.

\item[] \textbf{Return}: $\mathscr{F}$
\end{enumerate}
\end{algorithm}

A modification of Algorithm \ref{ALG0-} using Theorem \ref{IKL2} yields non-fixed c-Wieferich primes. However, an algorithm based on Theorem \ref{IKL} is much more efficient and faster in computing fixed c-Wieferich primes. In Table \ref{eval_table1a}, we give c-Wieferich primes in $\F_{3^2}[T]$ with corresponding elements (used to compute them) of degree $s$ over 
\begin{table}[ht!]\centering
\small{
\begin{tabular}{ccc}
\hline
$s$ & $\alpha$ in $\F_{9^{s}}$ of degree $s$ over $\F_9$ with ${\rm Tr}_{\F_{9^s}/\F_9}(\alpha)\neq 0$ & c-Wieferich primes \\
\hline
\hline
1  & $\{t, 2t + 1\}$ & $T^3 + (t + 1)T + 1, T^3 + (t + 1)T + t$,\\&& $T^3 + (t + 1)T + 2t + 2, T^3 + (2t + 2)T + 1$,\\ && {\tiny{$T^3 + (2t + 2)T + t + 1,T^3 + (2t + 2)T + 2t + 1$}}.\\
$2,3,4,5,6,7,8$  & -- & -- \\
\hline
\end{tabular}}
\caption{Some elements in subfields of $\overline{\F_{3^2}}$ and the corresponding c-Wieferich primes.}
\label{eval_table1a}
\end{table}
$\F_9$. We do the same for the rings $\F_{5}[T]$ and $\F_{16}[T]$ as indicated in Tables \ref{eval_table4} and \ref{eval_table2} respectively. Observe that, all c-Wieferich primes of degree $2$ and $6$ in $\F_{16}[T]$ are $\F_2$-invariant. The examples highlighted in blue in Table \ref{eval_table3} are $\{0,t\}$-invariant c-Wieferich primes in $\F_{2^{4}}[T]$, while that in red was discovered by V. Mauduit. 

\begin{table}[ht!]\centering
\small{
\begin{tabular}{ccc}
\hline
$s$ & $\alpha$ in $\F_{5^{s}}$ of degree $s$ over $\F_5$ with ${\rm Tr}_{\F_{5^s}/\F_5}(\alpha)\neq 0$ & c-Wieferich primes\\
\hline
\hline
1  & $\{4\}$ & $T^5+4T+1$ \\
2  & $\{2t + 2, 3t + 4\}$ & $T^{10} + 3T^6 + 4T^5 + T^2 + T + 1$ \\
4  & $\{t^3 + t^2 + 4, t + 4, 3t^3 + 3t^2 + t + 4,t^3 + t^2 + 3t + 4,$& {\tiny{$T^{20} + T^{16} + 4T^{15} + T^{12} + 3T^{11} + $}} \\ 
& {$t^3 + t^2 + 3t + 4\}$} & {\tiny{$ + T^8 + 2T^7 + T^5 + T^4 + T^3 + 4T + 1$}}  \\
$3,5,6,7,8,10,11$, & -- & -- \\
\hline
\end{tabular}}
\caption{Some elements in subfields of $\overline{\F_{5}}$ and the corresponding c-Wieferich primes.}
\label{eval_table4}
\end{table}
\begin{table}[ht!]\centering
\small{
\begin{tabular}{ccc}
\hline
$s$ & $\alpha$ in $\F_{16^{s}}$ of degree $s$ over $\F_{16}$ with ${\rm Tr}_{\F_{16^s}/\F_{16}}(\alpha)\neq 0$ & c-Wieferich primes\\
\hline
\hline
1  & {\tiny{$\{1\}$}} & {\tiny{$T^2 + T + t^3, T^2 + T + t^3 + 1, T^2 + T + t^3 + t^2 + t$}},\\&& {\tiny{$T^2 + T + t^3 + t, T^2 + T + t^3 + t^2, T^2 + T + t^3 + t^2 + 1$}},\\ && {\tiny{$T^2 + T + t^3 + t + 1, T^2 + T + t^3 + t^2 + t + 1$}}.\\
2  & -- & -- \\
3  & {\tiny{$\{z^{11} + z^9 + z^6 + z^5 + z^4 + z^2,$}} & {\tiny{$T^6 + T^5 + t^3T^4 + T^3 + (t^2 + 1)T^2 + (t^3 + t^2)T + t^2 + t + 1$,}}\\ & {\tiny{$z^{11} + z^{10} + z^8 + z^7 + z^6 + z^2 + z$}}, & {\tiny{$T^6 + T^5 +
(t^3 + 1)T^4 + T^3 + (t^2 + 1)T^2 + (t^3 + t^2 + 1)T + t + 1$,}}\\& {\tiny{$z^{10} + z^9 +z^8 + z^7 + z^5 + z^4 + z + 1\}$}} & {\tiny{$T^6 + T^5 + (t^3 + t)T^4 + T^3 + (t + 1)T^2 + t^3T + t^2 + t$,}}\\ && {\tiny{$T^6 + T^5 +
(t^3 + t + 1)T^4 + T^3 + (t + 1)T^2 + (t^3 + 1)T + t^2$,}}\\&& {\tiny{$T^6 + T^5 + (t^3 + t^2)T^4 + T^3 + tT^2 + (t^3 + t^2 + t + 1)T + t^2 + t$.}}\\& & {\tiny{$T^6 +T^5 + (t^3 + t^2 + 1)T^4 + T^3 + tT^2 + (t^3 + t^2 + t)T + t^2 + 1$,}}\\&& {\tiny{$T^6 + T^5 + (t^3 + t^2 + t)T^4 + T^3 + t^2T^2 + (t^3 + t + 1)T + t$.}}\\&&{\tiny{$T^6 + T^5 + (t^3 + t^2 + t + 1)T^4 + T^3 + t^2T^2 + (t^3 + t)T + t^2 + t + 1$.}}\\
\hline
\end{tabular}}
\caption{Some elements in subfields of $\overline{\F_{16}}$ and the corresponding c-Wieferich primes.}
\label{eval_table2}
\end{table}

We demonstrate how to compute the c-Wieferich primes of degree $3$ over $\F_{2^4}$. To determine these non-fixed c-Wieferich primes of degree $3$ in $\F_{2^{4}}[T]$ according to Theorem \ref{IKL2}, we set the degree of the extension to be $s=3$ and search all the elements $\alpha\in \F_{2^{12}}$ with ${\rm Tr}_{\F_{2^{12}}/\F_{2^4}}(\alpha)=0$ and $F_{2}\equiv 0(\bmod\;T^{2^4}-T-\alpha)$. For example, we used the cubic field extension $\F_{2^4}(z)$ with $f_{\rm min}^z(X)=X^3 + X + 1$. Using SAGE, we found $B_{2^4,3,0}=\{\alpha\}=\{\alpha_1=z^{11} + z^9 + z^6 + z^5 + z^4 + z^2 + 1, \alpha_2=z^{10} + z^9 + z^8 + z^7 + z^5 + z^4 + z, \alpha_3=z^{11} + z^{10} + z^8 + z^7 + z^6 + z^2 + z + 1\}$. So $R_{2^4,3,0}=(T^{2^4}-T-\alpha_1)(T^{2^4}-T-\alpha_2)(T^{2^4}-T-\alpha_3)=T^{48} + T^{33} + T^{18} + T^{16} + T^3 + T + 1\in \F_{2^4}[T]$. Upon factorisation of $R_{2^4,3,0}\in \F_{2^4}[T]$, we obtain the list of c-Wieferich primes of degree $3$ indicated in the third column of Table \ref{eval_table3}, (with $f_{\rm min}^t(X)=X^4+X+1$). Furthermore, since $T^3+T+1$ is a non-fixed c-Weiferich prime, there are $15$ more non-fixed c-Weiferich primes obtained by applying a translation $\F_{2^4}$-automorphism to $T^3+T+1$, for example, $\sigma_t(T^3+T+1)=(T+t)^3+(T+t)+1=T^3 + tT^2 + (t^2 + 1)T + t^3 + t + 1$. We summarise these results in Table \ref{eval_table3}. Observe the similarity in the primes of Tables \ref{eval_table2} and \ref{eval_table3}. We also observe that all the c-Wieferich primes of degree $2$ and $6$ in $\F_{2^4}[T]$ are $\F_2$-invariant. The examples highlighted in blue are $\{0,t\}$-invariant c-Wieferich primes in $\F_{2^{4}}[T]$, while that in red was discovered by V. Mauduit, \cite{veronique}.

\begin{table}[ht!]\centering
\small{
\begin{tabular}{ccc}
\hline
$s$ & $\alpha$ in $\F_{16^{s}}$ of degree $s$ over $\F_{16}$ with ${\rm Tr}_{\F_{16^s}/\F_{16}}(\alpha)=0$ & c-Wieferich primes of degree $s$ in $\mathcal{A}$\\
\hline
\hline
1  & -- & -- \\
2  & $\{1\}$ & $T^2 + T + t^3 + t^2, T^2 + T + t^3 + 1$,\\ && $ T^2 + T + t^3 + t, T^2 + T + t^3 + t^2 + 1,$\\
  &  & $T^2 + T + t^3,  T^2 + T + t^3 + t^2 + t$, \\ & & $T^2 + T + t^3 + t^2 + t + 1, T^2 + T + t^3 + t + 1$ \\ 
3  & $\{z^{11} + z^9 + z^6 + z^5 + z^4 + z^2 + 1$,  & ${\color{blue}{T^3 + T + 1}}, {\color{blue}{T^3 + tT^2 + (t^2 + 1)T + t^3 + t + 1}}$, \\ & $z^{11} + z^{10} + z^8 + z^7 + z^6 + z^2 + z + 1$,  & ${\color{red}{T^3 + T^2 + 1}},T^3 + t^2T^2 + tT + t^3 + 1$, \\ & $z^{10} + z^9 + z^8 + z^7 + z^5 + z^4 + z\}$ & $ T^3 + (t^2 + 1)T^2 + (t + 1)T + t^3 + t^2 + t$, \\& & $T^3 + (t^2 + t + 1)T^2 + (t^2 + t + 1)T + t^2 + t + 1$, \\ & & $T^3 + (t^3 + 1)T^2 + (t^3 + t^2)T + t^2 + t + 1$, \\&& $T^3 + (t^3 + t + 1)T^2 + t^3T + t^2 + t$, \\&& $T^3 + (t^3 + t)T^2 + (t^3 + 1)T + t^2$, \\&& $T^3 + (t^3 + t^2)T^2 + (t^3 + t^2 + t)T + t^2 + 1$, \\&& $T^3 + (t^3 + t^2 + 1)T^2 + (t^3 + t^2 + t + 1)T + t^2 + t$, \\ & & $ T^3 + (t^3 + t^2 + t)T^2 + (t^3 + t)T + t^2 + t + 1$, \\&& $T^3 + (t^2 + t)T^2 + (t^2 + t)T + t^2 + t$,\\&& $T^3 + t^3T^2 + (t^3 + t^2 + 1)T + t + 1$,\\&& $T^3 + (t + 1)T^2 + t^2T + t^3 + t^2 + 1$,\\ & & $T^3 + (t^3 + t^2 + t + 1)T^2 + (t^3 + t + 1)T + t$\\
5&--&--\\
6& {\tiny{$\{z^{20} + z^{19} + z^{18} + z^{16} + z^{15} + z^{13} + z^{11}
+ z^{10} + z^6 + z^2 + 1$,}} & product of c-Wieferich primes of degree $6$ is \\& {\tiny{$z^{23} + z^{21} + z^{19} + z^{15} + z^{14} + z^{13} + z^{11} + z^{10} + z^9 + z^7 + z^4 + z^3 + 1$,}} &$T^{48} + T^{33} + T^{32} + T^{18} + T^3 + T^2 + 1$\\& {\tiny{$z^{23} + z^{21} + z^{20} + z^{18} + z^{16} + z^{14} + z^9 + z^7 + z^6 + z^4 + z^3 + z^2 + 1\}$}}&see Table \ref{eval_table2}\\
\hline
\end{tabular}}
\caption{Some elements in subfields of $\overline{\F_{2^4}}$ and the corresponding c-Wieferich primes.}
\label{eval_table3}
\end{table}

From literature, there are three c-Wieferich primes of degrees $5$, $10$ and $20$ defined over $\F_5[T]$ obtained by brute force / other methods. Using Theorem \ref{IKL}, we searched for c-Wieferich primes of degree up to 55 and obtained a new c-Wieferich prime of degree 45. In particular, $T^{45} + T^{41} + 3T^{40} + T^{37} + T^{36} + 2T^{35} + T^{33} + 4T^{32} + T^{31} + T^{30} + T^{29} + 2T^{28} + 2T^{27} + 4T^{26} + 4T^{21} + 3T^{20} + 4T^{17} + T^{15} + 4T^{13} + 2T^{12} + 4T^9 + 4T^8 + 2T^7 + 4T^6 + T^5 + T^4 + 2T^3 + T^2 + 3T + 2\in \F_5[T]$ represented by the triple $(5,1,9)$ highlighted in Table \ref{datatable}. 

We now give some of the ``highest" degree examples of c-Wieferich primes (summarised in Table \ref{datatable}) we obtained in different rings for $p<20$. We found all of the possible c-Wieferich primes in the listed cases, but we only present a few of them. In $\F_{3^4}[T]$, we found $T^9 + T^6 + T^4 + T^3 + T^2 + T + 1,\ldots, T^9 + T^6 + T^4 + (2t^3 + 2t^2 + 2t + 1)T^3 + T^2 + (t^3 + t^2 + t + 1)T + t^3 + 2t + 2$ where $t^4+2t^3+2=0$. In the ring $\F_{7^3}[T]$, we found $T^{14} + 5T^8 + 4T^7 + T^2 + 3T + 6,\ldots, T^{14} + 5T^8 + (6t^2 + 6t)T^7 + T^2 + (t^2 + t)T + t^2 + 6t + 6$ where $t^3+6t^2+4=0$. In $\F_{11^2}[T]$, we obtained $T^{33} + (5t + 3)T^{23} + 10T^{22} + (3t + 1)T^{13} + (4t + 9)T^{12} + (8t + 4)T^{11} + (7t + 8)T^3 + (10t + 7)T^2 + (2t + 3)T + 3t + 9,\ldots, T^{33} + (6t + 1)T^{23} + (10t + 2)T^{22} + (8t + 2)T^{13} + (6t + 2)T^{12} + (3t + 7)T^{11} + (4t + 3)T^3 + (5t + 3)T^2 + (6t + 5)T + 4t + 1$ where $t^2+7t+2=0$. Lastly, in $\F_{19^2}[T]$, we obtained $T^{38} + (8t + 10)T^{20} + 18T^{19} + (18t + 12)T^2 + (15t + 14)T + 16t + 1,\ldots, T^{38} + (11t + 18)T^{20} + (18t + 11)T^{19} + (t + 11)T^2 + (8t + 15)T + 12t + 18\in \F_{19^2}[T]$ where $t^2+18t+2=0$. 

There are some more examples not summarised in the Table \ref{datatable}, these include $T^{129} + 40T^{87} + 7T^{86} + 3T^{45} + 29T^{44} + 17T^{43} + 42T^3 + 7T^2 + 26T + 10$, $T^{129} + 40T^{87} + 22T^{86} + 3T^{45} + 42T^{44} + 28T^{43} + 42T^3 + 22T^2 + 15T + 23$ in $\F_{43}[T]$. In $\F_{61}[T]$, we found $T^{183}+58T^{123}+52T^{122}+3T^{63}+18T^{62}+35 T^{61}+60T^3+52T^2+26T+35$. In $\F_{131}[T]$, we found $T^{393}+128T^{263}+129T^{262}+3T^{133}+4T^{132}+50 T^{131}+130T^3+129T^2+81T+33$. In $\F_{463}[T]$, we found $T^{926}+461T^{464}+193T^{463}+T^2+270T+277$ and $T^{926}+461T^{464}+188T^{463}+T^2+275T+182$.

\section{c-Wieferich primes under constant field extensions}\label{E}
In characteristic $2$, the polynomial $f=T^3+T^2+1$ is always irreducible over $\F_{2^{3m\pm 1}}$ for any $m\in\Z_{\geqslant 1}$. This is because $f$ is a prime polynomial in $\F_2[T]$ and its degree over $\F_2$ is always coprime to $3m\pm 1$ for any $m\in \Z_{\geqslant 1}$. In \cite{veronique}, V. Mauduit showed that $T^3+T^2+1$ is a non-fixed c-Wieferich prime in $\F_{2^4}[T]$. In addition, Mauduit proved that the polynomial $T^3+T^2+1\in \F_{2}[T]$ is a non-fixed c-Wieferich prime for all rings of the form $\F_{2^{3m+1}}[T]$, $m\in\Z_{\geqslant 0}$. However, in her list, \textit{Mauduit left out a number of possible candidates}, see Table \ref{eval_table3} for more examples.  We now generalise Mauduit's horizontal existence result as follows.

\begin{thm}\label{diagonalcw}
Let $\mathcal{P}$ be a prime polynomial in $\F_{q}[T]$ of degree $m$. For any $n\in \Z_{\geqslant 1}$, we have that $\mathcal{P}$ is a c-Wieferich prime in $\F_{q}[T]$, if and only if $\mathcal{P}$ is a c-Wieferich prime in $\F_{q^{mn+1}}[T]$.
\end{thm}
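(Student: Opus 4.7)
The plan is to reduce the question to Proposition \ref{ASB2}, noting that the polynomials $[i]$ and $F_i$ depend on the base field, then to show that their residues modulo $\mathcal{P}$ do not change when the base field is enlarged from $\F_q$ to $\F_{q^{mn+1}}$.

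First I would check that $\mathcal{P}$ remains irreducible (and of the same degree $m$) over $\F_{q^{mn+1}}$. A degree $m$ irreducible in $\F_q[T]$ stays irreducible over $\F_{q^k}$ if and only if $\gcd(m,k)=1$, and here $\gcd(m,mn+1)=\gcd(m,1)=1$, so $\mathcal{P}$ is indeed a prime of degree $m$ in $\F_{q^{mn+1}}[T]$. By Proposition \ref{ASB2}, being c-Wieferich over either base field amounts to a divisibility by $\mathcal{P}$ of the appropriate $F_{m-1}$.

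Next, write $[i]_q:=T^{q^i}-T$ and $[i]_{q^{mn+1}}:=T^{q^{i(mn+1)}}-T$, and let $F_i^{(q)}$, $F_i^{(q^{mn+1})}$ be the associated $F$-polynomials. The key computation is the congruence
\begin{equation*}
[i]_{q^{mn+1}}\equiv [i]_q\pmod{\mathcal{P}}\qquad\text{for every }i\geqslant 0.
\end{equation*}
Indeed, if $\zeta$ is any root of $\mathcal{P}$ in $\overline{\F_q}$, then $\zeta^{q^m}=\zeta$, so writing $i(mn+1)=imn+i$ we get $\zeta^{q^{i(mn+1)}}=\zeta^{q^i}$; since $\mathcal{P}$ is the minimal polynomial of $\zeta$, this forces $\mathcal{P}$ to divide $[i]_{q^{mn+1}}-[i]_q$.

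Now I would run a straightforward induction using the recurrence $F_i=(-1)^i+[i]F_{i-1}$ with $F_0=1$, which holds in both base fields: combining the congruence above with the inductive hypothesis $F_{i-1}^{(q^{mn+1})}\equiv F_{i-1}^{(q)}\pmod{\mathcal{P}}$ gives $F_i^{(q^{mn+1})}\equiv F_i^{(q)}\pmod{\mathcal{P}}$. Applying this at $i=m-1$ and invoking Proposition \ref{ASB2} on both sides yields the claimed equivalence. There is no real obstacle here; the only delicate point is the bookkeeping showing $i(mn+1)\equiv i\pmod m$ in the exponent of $\zeta$, which is exactly the reason for the arithmetic shape $mn+1$ of the extension degree in the statement.
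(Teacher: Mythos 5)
Your proposal is correct and follows essentially the same route as the paper: both reduce to Proposition \ref{ASB2} via the key congruence $[i]_{q^{mn+1}}\equiv [i]_q\pmod{\mathcal{P}}$ (the paper derives it from $T^{q^m}\equiv T\pmod{\mathcal{P}}$ directly, you from a root $\zeta$ satisfying $\zeta^{q^m}=\zeta$, which is the same fact) and then propagate it through the recurrence to get $F_{q^{mn+1},m-1}\equiv F_{q,m-1}\pmod{\mathcal{P}}$. The only difference is that you make explicit the irreducibility of $\mathcal{P}$ over $\F_{q^{mn+1}}$ and the induction, which the paper leaves implicit.
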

Before we state the proof Theorem \ref{diagonalcw}, we shall need the following extra notation. For each $i,j\in \Z_{\geqslant 1}$, we define the symbols $[i]_{q^j}$ and $F_{q^j,i}$ as follows: $[i]_{q^j}=T^{q^{ji}}-T$, and $F_{q^j,i}=(-1)^i+[i]_{q^j}F_{q^j,i-1}$ with $F_{q^j,0}=1$.
\begin{proof}
Let $r=mn+1$ where $n\in\Z_{\geqslant 0}$ and $\mathcal{Q}_m$ be any prime polynomial in $\F_q[T]$ of degree $m$. Since $m$ is coprime to $r$, we have $\mathcal{Q}_m$ is a prime polynomial in $\F_{q^r}[T]$, hence $[m]_q=T^{q^m}-T\equiv 0(\bmod\;\mathcal{Q}_m)$. It follows that, for any $i\in \Z_{\geqslant 1}$, we have $[i]_{q^{r}}=T^{q^{r i}}-T=T^{q^{(mn+1)i}}-T=T^{q^{mni}\cdot q^i}-T\equiv T^{q^{m}\cdots q^{m}\cdot q^i}-T\equiv T^{q^i}-T=[i]_q(\bmod\;\mathcal{Q}_m)$. Therefore, $F_{q^{r},m-1}\equiv F_{q,m-1}(\bmod\;\mathcal{Q}_m)$ and the claim follows from this result.
\end{proof}

We know that $f=T^3+T^2+1$ and $f(T+1)=T^3+T+1$ are non-fixed c-Wieferich primes in $\F_2[T]$. By Theorem \ref{diagonalcw}, it follows that they are c-Wieferich primes in $\F_{2^{3+1}}[T]$, (see Table \ref{eval_table3}). Another example is the polynomial $\mathcal{P}=T^6+T^4+T^3+T^2+2T+2$ which is a fixed c-Wieferich prime for $\F_3[T]$. By Theorem \ref{diagonalcw}, $\mathcal{P}$ is a c-Wieferich prime for $\F_q[T]$ where $q=3^{6m+1}$ and $m\in\Z_{\geqslant 1}$. On the other hand, the fixed polynomial $\mathcal{P}_1={\color{red}{T^6 + T^4 + 2T^3 + T^2 + T + 2}}=2^{-\deg(\mathcal{P})}\mathcal{P}(2T)$ is not a c-Wieferich prime polynomial in $\F_{3}[T]$ but in $\F_{3^5}[T]$. By Theorem \ref{diagonalcw}, the polynomial $\mathcal{P}_1$ is also a c-Weferich prime in the rings of the form $\F_{3^{5(6n+1)}}[T]$ for any $n\in\Z_{\geqslant 0}$. Furthermore, since the Diophantine equation $6n+1=30k+5$ has no integer solutions, it follows that there is no ring extension $\F_{3^{r}}[T]$ in which both $\mathcal{P}$ and $\mathcal{P}_1$ occur as c-Wieferich primes. 

To understand the behaviour above, consider the triple $(p,m,s)$, where $p$ is a prime number and $s,m\in \Z_{\geqslant 1}$. We say $\mathcal{P}$ is a c-Wieferich prime for the triple $(p,m,s)$ if there exists a subgroup $\mathcal{G}$ of $(\F_{p^{m}},+)$ such that $\mathcal{P}$ is a $\mathcal{G}$-fixed c-Wieferich prime in $\F_{p^m}[T]$ of degree $ps$. For example, $\mathcal{P}=T^6+T^4+T^3+T^2+2T+2$ is a c-Wieferich prime for the triples $(3,1,2)$, $(3,7,2)$, $(3,13,2),\ldots$ since $\mathcal{P}$ is an $\F_3$-fixed c-Wieferich prime of degree $6$ in $\F_{3}[T]$, $\F_{3^7}[T]$ and $\F_{3^{13}}[T]$. For each $q$ and $s\in\Z_{\geqslant 1}$, we let $B_{q,s}=\cup_{\chi\in \F_q^{\times}}B_{q,s,\chi}$. Each element of $B_{q,s}$ has $p^{-1}q$ distinct c-Wieferich primes associated to it. We now summarise some of the data we have computed regarding counts of c-Wieferich primes associated to the triples $(p,m,s)$ in Table \ref{datatable}. The values in the table are $(p,m,s)$ and $|B_{p^m,s}|$, and they give at least some indication about how these numbers vary.

\begin{table}[ht!] \centering \caption{Counts of fixed c-Wieferich primes}\label{datatable}
$
\begin{array}{|c|c||c|c||c|c||c|c||c|c||c|c|}
\hline
(p,m,s) & \# & (p,m,s) &  \# & (p,m,s) & \# & (p,m,s) &\# & (p,m,s) & \# & (p,m,s) &\# \\
\hline\hline 
(3,1,1) & 0 & (3,1,2) & 1 & (3,1,3) & 1 & (3,1,4) & 1 & (3,1,5) & 1 & (3,1,6) & 0 \\ \hline
(3,1,7) & 0 & (3,1,8) & 0 & (3,1,9) & 0 & (3,1,10) & 0 & (3,1,11) & 0  & (3,1,12) & 0 \\ \hline
(3,1,13) & 0 & (3,1,14) & 0 & (3,1,15) & 0 & (3,1,16) & 0 & (3,1,17) & 0 & (3,2,1) & 2 \\ \hline
(3,2,2) & 0 & (3,2,3) & 0 & (3,2,4) & 0  & (3,2,5) & 0  & (3,2,6) & 0 & (3,2,7) & 0 \\ \hline
(3,2,8) & 0 & (3,3,1) & 0 & (3,3,2) & 1 & ( (3,3,3) & 0 & (3,3,4) & 0 &(3,3,5) & 0 \\ \hline
(3,4,1) & 2 & (3,4,2) & 0 & (3,4,3) & 1 & (3,4,4) & 0 & (3,5,1) & 0 & (3,5,2) & 1 \\ \hline
(3,5,3) & 0 & (3,6,1) & 2 & (3,6,2) & 0  & (3,7,1) & 0 & (3,7,2) & 1 & (3,8,1) & 2 \\ \hline
(3,8,2) & 0 & (3,9,1) & 0 & (3,10,1) & 2 & (3,11,1) & 0 & (3,12,1) & 2 & (3,13,1) & 0 \\ \hline
(3,14,1) & 2 & (3,15,1) & 0 & (3,16,1) & 2 & (3,17,1) & 0 & (5,1,1) & 1 & (5,1,2) & 1 \\ \hline
(5,1,3) & 0 & (5,1,4) & 1 & (5,1,5) & 0 & (5,1,6) & 0 & (5,1,7) & 0 & (5,1,8) & 0 \\ \hline
\color{blue}{{(5,1,9)}} & 1 & (5,1,10) & 0 & (5,1,11) & 0 & (5,2,1) & 1 & (5,2,2) & 0 & (5,2,3) & 0 \\ \hline
(5,2,4) & 0 & (5,2,5) & 0 & (5,3,1) & 4 & (5,3,2) & 1 & (5,3,3) & 0 & (5,4,1) & 1 \\ \hline
(5,4,2) & 0 & (5,5,1) & 1 & (5,5,2) & 1 & (5,6,1) & 4 & (5,7,1) & 1 & (5,8,1) & 1 \\ \hline
(5,9,1) & 4 & (5,10,1) & 1 & (5,11,1) & 1 & (7,1,1) & 1 & (7,1,2) & 1 & (7,1,3) & 0 \\ \hline
(7,1,4) & 0 & (7,1,5) & 0 & (7,1,6) & 0 & (7,1,7) & 0 & (7,1,8) & 0 & (7,1,9) & 0 \\ \hline
(7,2,1) & 3 & (7,2,2) & 0 & (7,2,3) & 1 & (7,2,4) & 0 & (7,3,1) & 4 & (7,3,2) & 1 \\ \hline
(7,3,3) & 3 & (7,4,1) & 3 & (7,4,2) & 0 & (7,5,1) & 1 & (7,6,1) & 6 & (7,7,1) & 1 \\ \hline
(7,8,1) & 3 & (7,9,1) & 4 & (11,1,1) & 1 & (11,1,2) & 0 & (11,1,3) & 1 & (11,1,4) & 0 \\ \hline
(11,1,5) & 0 & (11,1,6) & 0 & (11,1,7) & 0 & (11,2,1) & 1 & (11,2,2) & 2 & (11,2,3) & 2 \\ \hline
(11,3,1) & 1 & (11,3,2) & 0 & (11,4,1) & 5 & (11,5,1) & 6 & (11,6,1) & 1 & (11,7,1) & 1 \\ \hline
\end{array}
$
\end{table}

For example, for the triple $(3,2,1)$, we have $B_{3^2,1}=\{t,2t+1:t^2+2t+2=0\}$ and hence $3\cdot 2=6$ different fixed c-Wieferich primes of degree $3$, namely $\mathcal{L}=\{T^3 + (t + 1)T + 1, T^3 + (t + 1)T + t, T^3 + (t + 1)T + 2t + 2, T^3 + (2t + 2)T + 1, T^3 + (2t + 2)T + t + 1, T^3 + (2t + 2)T + 2t + 1\}$, (also see Table \ref{eval_table1a}). By Theorem \ref{diagonalcw}, these same primes correspond to the triples $(3,2(3n+1),1)$ where $n\in \Z_{\geqslant 0}$. We demonstrate this below: 
\begin{enumerate}
\item in $\F_{3^4}[T]$, we have $B_{3^4,1}=\{2w^3 + 2w^2 + 1, w^3 + w^2:w^4 + 2w^3 + 2=0\}$. This is the image of $B_{3^2,1}$ under the embedding $\iota:\F_{3^2}\to \F_{3^4}$ defined by $t\mapsto w^3 + w^2$. So the primes in $\mathcal{L}$ again appear among c-Wieferich primes in $\F_{3^4}[T]$, (in this case there are $3^{-1}\cdot 3^4\cdot 2$ distinct c-Wieferich primes in $\F_{3^4}[T]$ of degree $3$). For example, $T^3 + (t + 1)T + 1$ is identified with $T^3 + (w^3 +w^2 + 1)T + 1$ in $\F_{3^4}[T]$. 
\item in $\F_{3^8}[T]$, we have $B_{3^8,1}=\{2w^7 + 2w^6 + 2w^5 + w + 2, w^7 + w^6 + w^5 + 2w + 2:w^8 + 2w^5 + w^4 + 2w^2 + 2w+ 2=0\}$. This is the image of $B_{3^2,1}$ under $\iota:\F_{3^2}\to \F_{3^8}$ defined by $t\mapsto w^7 + w^6 + w^5 + 2w + 2$. 
\end{enumerate}

Attached to the triple $(3,5,2)$ is a c-Wieferich prime $\mathcal{Q}=T^6 + T^4 + 2T^3 + T^2 + T + 2$. One can easily show that $\mathcal{Q}$ is one of the two fixed primes in $\F_3[T]$ of degree $6$, (the other being $\mathcal{P}=T^6 + T^4 + T^3 + T^2 + 2T + 2$). Furthermore, we have already seen that $\mathcal{P}$ is a c-Wieferich prime in $\F_3[T]$ while $\mathcal{Q}$ is a c-Wieferich prime in $\F_{3^5}[T]$. With this limited data, a natural question arises: given a $q$, do all fixed primes in $\F_{q}[T]$ become c-Wieferich primes in some $\F_{q^{r}}[T]$? \textit{We provide a partial answer in Lemma \ref{impo} and Theorem \ref{impo3}.}

\begin{lem}\label{impo}
Fix $s\in \Z_{\geqslant 1}$ and an element $\alpha \in \F_{q^s}$ of degree $s$ over $\F_q$. Fix $k\in \Z_{\geqslant 1}$ and put $r=1+sk$. Then for any $n\in \Z_{\geqslant 1}$, we have that $F_{q,n}\equiv 0(\bmod\;T^q-T-\alpha)$ if and only if  $F_{q^r,n}\equiv 0(\bmod\;T^{q^r}-T-\alpha)$. In particular, for a fixed $\chi\in \F_q^{\times}$, we have $\alpha\in B_{q^r,s,\chi}$ if and only if $\alpha\in B_{q,s,\chi}$.
\end{lem}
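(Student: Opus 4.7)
The strategy is to translate divisibility of $F_{q,n}$ by the irreducible polynomial $T^q-T-\alpha$ (resp.\ of $F_{q^r,n}$ by $T^{q^r}-T-\alpha$) into vanishing at a root, and then show that the recursive sequences produce the same values at such roots. Let $\zeta$ be a root of $T^q-T-\alpha$ and $\eta$ a root of $T^{q^r}-T-\alpha$; since both polynomials are irreducible over the appropriate base, divisibility is equivalent to $F_{q,n}(\zeta)=0$ (resp.\ $F_{q^r,n}(\eta)=0$). The recursions $F_{q,n}(\zeta)=(-1)^n+[n]_q(\zeta)F_{q,n-1}(\zeta)$ and $F_{q^r,n}(\eta)=(-1)^n+[n]_{q^r}(\eta)F_{q^r,n-1}(\eta)$ then reduce everything to comparing the numbers $[i]_q(\zeta)$ and $[i]_{q^r}(\eta)$.

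Next I would iterate $\zeta^q=\zeta+\alpha$ to get $[i]_q(\zeta)=\zeta^{q^i}-\zeta=\sum_{j=0}^{i-1}\alpha^{q^j}$, and iterate $\eta^{q^r}=\eta+\alpha$ to get $[i]_{q^r}(\eta)=\sum_{j=0}^{i-1}\alpha^{q^{rj}}$. The key arithmetic observation is that $r=1+sk$ yields $q^{rj}=q^j\cdot q^{skj}$, and since $\alpha\in\F_{q^s}$ we have $\alpha^{q^{skj}}=\alpha$, hence $\alpha^{q^{rj}}=\alpha^{q^j}$ for every $j\in\Z_{\geqslant 0}$. Therefore
\[
[i]_{q^r}(\eta)=\sum_{j=0}^{i-1}\alpha^{q^{rj}}=\sum_{j=0}^{i-1}\alpha^{q^j}=[i]_q(\zeta)
\]
for every $i\geqslant 1$. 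A straightforward induction on $n$ using the two recursions, with identical coefficients at each step, now yields $F_{q^r,n}(\eta)=F_{q,n}(\zeta)$ for all $n\geqslant 0$, which gives the first equivalence.

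For the \emph{In particular} statement, I would verify the extra data in the definition of $B_{q,s,\chi}$ is preserved. Since $r=1+sk$, we have $\gcd(r,s)=1$, so the smallest $d$ with $\alpha\in\F_{q^{rd}}$ (equivalently $s\mid rd$) is $d=s$; thus $\alpha$ has degree $s$ over $\F_{q^r}$ iff it has degree $s$ over $\F_q$. Moreover, the same relation $\alpha^{q^{rj}}=\alpha^{q^j}$ gives
\[
{\rm Tr}_{\F_{q^{rs}}/\F_{q^r}}(\alpha)=\sum_{j=0}^{s-1}\alpha^{q^{rj}}=\sum_{j=0}^{s-1}\alpha^{q^j}={\rm Tr}_{\F_{q^s}/\F_q}(\alpha),
\]
so the trace condition $=\chi$ transfers. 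Applying the first part with $n=ps-1$ then identifies the congruence conditions defining $B_{q,s,\chi}$ and $B_{q^r,s,\chi}$.

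The proof is essentially a direct computation; the only subtlety I foresee is the coprimality $\gcd(r,s)=1$ which is needed both to reduce $\alpha^{q^{rj}}$ to $\alpha^{q^j}$ at the level of traces and to ensure the degree of $\alpha$ is unchanged when enlarging the constant field from $\F_q$ to $\F_{q^r}$. No deeper tool is required.
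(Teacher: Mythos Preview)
Your argument is correct and is essentially the paper's own: both reduce $F_{q,n}\bmod(T^q-T-\alpha)$ to a recursion whose step coefficients are $\sum_{j=0}^{i-1}\alpha^{q^j}$, then use $\alpha^{q^{rj}}=\alpha^{q^j}$ (from $\alpha\in\F_{q^s}$ and $r\equiv 1\pmod s$) to match the $q$- and $q^r$-versions term by term, and handle the degree and trace statements identically. One small quibble: $T^q-T-\alpha$ need not be irreducible over the base you are implicitly working over, but this is harmless since your own computation shows $F_{q,n}(\zeta)$ depends only on $\alpha$ and not on which root $\zeta$ is chosen, so vanishing at one root is equivalent to divisibility.
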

\begin{proof}
Put $f_{q,0}=1$ and define $f_{q,i}\in \F_q[x]$ recursively as follows $f_{q,i}(x)=(-1)^i+(\sum_{j=0}^{i}x^{q^j})f_{q,i-1}(x)$. Since $\alpha\in \F_{q^s}$, we have $\alpha^{q^r}=(\alpha^{q^{sk}})^q=\alpha^q$ and so $f_{q,n}(\alpha)=f_{q^r,n}(\alpha)$. Since $F_{q,n}\equiv f_{q,n}(\alpha)(\bmod\;T^q-T-\alpha)$ and $F_{q^r,n}\equiv f_{q^r,n}(\alpha)(\bmod\;T^{q^r}-T-\alpha)$, the first equivalence then follows. For the second equivalence, it remains only to note that $\alpha$ also has degree $s$ over $\F_{q^r}$ and ${\rm Tr}_{\F_{q^{rs}}/\F_{q^r}}(\alpha)={\rm Tr}_{\F_{q^{s}}/\F_{q}}(\alpha)$.
\end{proof}

\begin{thm}\label{impo3}
Let $s \in \Z_{\geqslant 1}$ be fixed, $\mathcal{G}=\chi\F_p$ for some $\chi\in\F_q^{\times}$ and $E_{q,s}=\{k\in \N:p\nmid 1+sk\}$. If there is
\begin{enumerate}
\item a $\mathcal{G}$-fixed c-Wieferich prime $\mathcal{P}$ over $\F_q$ of degree $ps$, then this prime polynomial is a factor of $R_{q,s,\alpha}$ for some $\alpha\in \F_{q^s}$ of degree $s$ over $\F_q$ and ${\rm Tr}_{\F_{q^s}/\F_q}(\alpha)\neq 0$. For each $k\in E_{q,s}$, there exists a $\mathcal{G}$-fixed prime over $\F_q$ that is a prime factor of $R_{q,s,\beta}$, where $\beta=\alpha-\frac{k}{1+sk}{\rm Tr}_{\F_{q^s}/\F_q}(\alpha)$. Moreover, this prime factor is a c-Wieferich prime polynomial in $\F_{q^{1+sk}}[T]$. 
\item no $\mathcal{G}$-fixed c-Wieferich prime over $\F_q$ of degree $ps$, then none of the $\mathcal{G}$-fixed primes over $\F_q$ is a c-Wieferich prime in $\F_{q^r}[T]$ for any $r\in\Z_{\geqslant 1}$. 
\end{enumerate}
\end{thm}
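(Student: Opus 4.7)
The plan is to use Theorem \ref{IKL} to identify $\mathcal{G}$-fixed c-Wieferich primes with their $B$-data and to exploit the Frobenius-cycle identity $\zeta^{q^r}-\zeta = \sum_{l=0}^{r-1}(\zeta^q-\zeta)^{q^l}$ to move between the $\F_q$- and $\F_{q^r}$-pictures; Lemma \ref{impo} is what makes the bridge go through.

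For (1), apply Theorem \ref{IKL} to the hypothesised $\mathcal{P}$ to obtain a representative $\alpha \in B_{q,s,\chi}$ with $\chi = {\rm Tr}_{\F_{q^s}/\F_q}(\alpha)$ and $\chi\F_p = \mathcal{G}$. Fix $k \in E_{q,s}$, put $r = 1+sk$, and note that $p\nmid r$ makes $r$ invertible in $\F_p \subset \F_q$. Then the element $\beta = \alpha - (k/r)\chi$ still lies in $\F_{q^s}$ and has the same degree $s$ over $\F_q$ (the shift is by an $\F_q$-scalar), while ${\rm Tr}_{\F_{q^s}/\F_q}(\beta) = \chi - (sk/r)\chi = \chi/r$ again generates $\mathcal{G}$ over $\F_p$. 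Theorem \ref{LOKI} then guarantees that $R_{q,s,\beta}$ factors over $\F_q$ into $\mathcal{G}$-fixed primes of degree $ps$; let $\mathcal{P}'$ be any such factor.

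The heart of the proof is the following computation. For any root $\zeta$ of $\mathcal{P}'$, one has $\zeta^q-\zeta = \beta^{q^j}$ for some $j$, and the telescoping identity together with $\beta^{q^s}=\beta$ and $r = 1+sk$ yields
\begin{align*}
\zeta^{q^r}-\zeta = \sum_{l=0}^{r-1}\beta^{q^{j+l}} = k\cdot{\rm Tr}_{\F_{q^s}/\F_q}(\beta) + \beta^{q^j} = \frac{k\chi}{r}+\Bigl(\alpha^{q^j}-\frac{k\chi}{r}\Bigr) = \alpha^{q^j},
\end{align*}
so $\mathcal{P}'$ divides $T^{q^r}-T-\alpha^{q^j}$ and hence divides $R_{q^r,s,\alpha}$ in $\F_{q^r}[T]$. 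By Lemma \ref{impo}, $\alpha \in B_{q,s,\chi}$ implies $\alpha \in B_{q^r,s,\chi}$, so Theorem \ref{IKL} applied over $\F_{q^r}$ asserts that every prime factor of $R_{q^r,s,\alpha}$ is a c-Wieferich prime in $\F_{q^r}[T]$; in particular so is $\mathcal{P}'$.

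For (2) I argue the contrapositive. Assume some $\mathcal{G}$-fixed prime $\mathcal{P}$ over $\F_q$ of degree $ps$ is a c-Wieferich prime in $\F_{q^r}[T]$ for some $r$; irreducibility of $\mathcal{P}$ over $\F_{q^r}$ forces $\gcd(r,ps)=1$. Theorem \ref{IKL} over $\F_{q^r}$ realises $\mathcal{P}$ as a prime factor of some $R_{q^r,s,\gamma}$ with $\gamma \in B_{q^r,s,\chi'}$ and $\chi'\F_p = \mathcal{G}$, whereas the $\mathcal{G}$-fixed structure over $\F_q$ realises it as a prime factor of $R_{q,s,\beta}$ for some $\beta \in \F_{q^s}$ of degree $s$. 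Running the same Frobenius-cycle identity in reverse at a common root forces $\gamma = \beta + k\cdot{\rm Tr}_{\F_{q^s}/\F_q}(\beta)$, which in particular lands $\gamma$ inside $\F_{q^s}$; Lemma \ref{impo} then upgrades $\gamma \in B_{q^r,s,\chi'}$ to $\gamma \in B_{q,s,\chi'}$, and Theorem \ref{IKL} produces the sought-after $\mathcal{G}$-fixed c-Wieferich prime over $\F_q$ of degree $ps$, contradicting the hypothesis. The most delicate point is precisely the bookkeeping of the Frobenius-cycle sum: making sure that the $sk+1$ terms collapse cleanly into $k$ trace-cycles plus one remainder is what lets the shift by $(k/r)\chi$ do its job, both in the forward direction of (1) and in reverse for (2).
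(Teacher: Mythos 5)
Your proof is correct and follows essentially the same route as the paper: Theorem \ref{IKL} to produce $\alpha\in B_{q,s,\chi}$, the telescoping identity $\zeta^{q^r}-\zeta=\sum_{l=0}^{r-1}(\zeta^q-\zeta)^{q^l}$ to show that $T^q-T-\beta$ with $\beta=\alpha-\frac{k}{1+sk}{\rm Tr}_{\F_{q^s}/\F_q}(\alpha)$ is precisely the trinomial over $\F_{q^s}$ sitting inside $T^{q^r}-T-\alpha$, and Lemma \ref{impo} to transfer the congruence on $F_{ps-1}$, with part (2) obtained by running the same computation in reverse. (Two incidental remarks: your value ${\rm Tr}_{\F_{q^s}/\F_q}(\beta)=\chi/(1+sk)$ is the correct one, the paper's displayed equality ${\rm Tr}(\beta)={\rm Tr}(\alpha)$ being an arithmetic slip that does not matter since only nonvanishing is used; and both you and the paper effectively treat part (2) only for $r\equiv 1\pmod{s}$.)
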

\begin{proof}
Let $\mathcal{P}$ be a $\mathcal{G}=\chi\F_p$-fixed c-Wieferich prime in $\F_q[T]$ of degree $ps$. By Theorem \ref{IKL}, $\mathcal{P}$ is a factor of $R_{q,s,\alpha}$ for some $\alpha$ of degree $s$ over $\F_q$ and $\chi={\rm Tr}_{\F_{q^s}/\F_q}(\alpha)\neq 0$. In particular, $F_{q,ps-1}\equiv 0(\bmod\;T^q-T-\alpha)$. Take any $k\in E_{q,s}$ and put $r=1+sk$. By Lemma \ref{impo}, $F_{q^r,ps-1}\equiv 0(\bmod\;T^{q^r}-T-\alpha)$. Since $r$ is coprime to $s$, we have $\alpha$, (as an element of $\F_{q^{rs}}$) has degree $s$ over $\F_{q^r}$. So ${\rm Tr}_{\F_{q^{rs}}/\F_{q^{r}}}(\alpha)={\rm Tr}_{\F_{q^s}/\F_q}(\alpha)$. 

Let $\beta=\alpha-\frac{k}{(1+sk)}{\rm Tr}_{\F_{q^s}/\F_q}(\alpha)$. Clearly $\beta$ belongs to $\F_{q^s}$. We now show that $\beta$ gives rise to c-Wieferich primes over $\F_{q^r}$ of degree $ps$. First, we show that $\beta$ has degree $s$ over $\F_q$ and ${\rm Tr}_{\F_{q^s}/\F_q}(\beta)\neq 0$. Assume that $\beta$ has degree less than $s$, i.e., $\beta^{q^{ri}}=\beta^{q^{rj}}$ for some $1\leqslant i<j< s$. Then $\alpha^{q^{ri}}-\frac{k}{(1+sk)}{\rm Tr}_{\F_{q^s}/\F_q}(\alpha)=\alpha^{q^{rj}}-\frac{k}{(1+sk)}{\rm Tr}_{\F_{q^s}/\F_q}(\alpha)$ which implies that $\alpha^{q^{ri}}=\alpha^{q^{rj}}$, a contradiction to the fact $\alpha$ has degree $s$ over $\F_{q^r}$. Furthermore, we have ${\rm Tr}_{\F_{q^s}/\F_q}(\beta)={\rm Tr}_{\F_{q^s}/\F_q}(\alpha)-\frac{ks}{1+sk}{\rm Tr}_{\F_{q^s}/\F_q}(\alpha)={\rm Tr}_{\F_{q^s}/\F_q}(\alpha)\neq 0$.

If $w\in \F_{q^{s}}$, then the trinomial $T^q-T-w$ divides $T^{q^r}-T-\alpha$ if and only if $\alpha=w+k{\rm Tr}_{\F_{q^s}/\F_q}(w)$. This implies that ${\rm Tr}_{\F_{q^s}/\F_q}(w)=\frac{1}{1+sk}{\rm Tr}_{\F_{q^s}/\F_q}(\alpha)$, hence $w=\alpha-\frac{k}{(1+sk)}{\rm Tr}_{\F_{q^s}/\F_q}(\alpha)=\beta$. As such, it follows that the polynomial $R_{q,s,\beta}$ divides $R_{q^r,s,\alpha}$. By Theorem \ref{IUO}, the trinomial $T^q-T-\beta$ decomposes into $\mathcal{G}=\chi\F_p$-fixed primes over $\F_{q^s}$. As such, the prime factors of $R_{q,s,\beta}$ over $\F_{q}$ are $\mathcal{G}$-fixed primes of degree $ps$. In addition, these are also c-Wieferich primes in $\F_{q^r}[T]$, since they divide $F_{q^r,ps-1}$. Note that if $p$ divides $1+sk$, then $T^q-T-w$ does not divide $T^{q^r}-T-\alpha$ for any $w\in \F_{q^s}$ since ${\rm Tr}_{\F_{q^s}/\F_q}(\alpha)\neq 0$. 

For the second part, let $\mathcal{P}\in \F_q[T]$ be a $\mathcal{G}$-fixed prime of degree $ps$. Assume that $\mathcal{P}$ is a c-Wieferich prime in $\F_{q^r}[T]$ for some $r=1+sk$ with $k\in E_{q,s}-\{0\}$ and that there are no fixed c-Wieferich primes in $\F_q[T]$ of degree $ps$. By Theorem \ref{IKL}, there exists an $\alpha\in \F_{q^{rs}}^{\times}$ of degree $s$ over $\F_{q^r}$ with ${\rm Tr}_{\F_{q^{rs}}/\F_{q^r}}(\alpha)\neq 0$ such that $F_{q^r,ps-1}\equiv 0(\bmod\;T^{q^r}-T-\alpha)$, i.e., $\mathcal{P}$ divides $\mathcal{R}_{q^r,s,\alpha}$. Since $\mathcal{P}$ is a $\mathcal{G}$-fixed of degree $ps$, it follows from Theorem \ref{LOKI} that it factors into almost Artin-Schreier primes in $\F_{q^s}[T]$. Say that one is $\mathcal{Q}=T^p-\beta T-\lambda$, with $\beta=\chi^{p-1}$ for some $\chi\in \F_q$. We have $\mathcal{Q}$ divides $T^{q^r}-T-\alpha$ and then we see that $\prod_{a\in \F_p}\mathcal{Q}(T+a\chi)=T^q-T-w$ divides $T^{q^r}-T-\alpha$ where $w\in \F_{q^s}$. The polynomial $T^q-T-w$ divides $T^{q^r}-T-\alpha$ if and only if $w=\alpha-\frac{k}{1+sk}{\rm Tr}_{\F_{q^s}/\F_q}(\alpha)$, hence $\alpha\in\F_{q^s}$. Lemma \ref{impo} then tells us that there exists a $\mathcal{G}$-fixed c-Wieferich prime of degree $ps$ in $\F_q[T]$ which is a contradiction.
\end{proof}

Given a $\mathcal{G}$-fixed c-Wieferich prime $\mathcal{P}$ in $\F_q[T]$ of degree $ps$ and $k\in E_{q,s}$, how do we find the $\mathcal{G}$-fixed primes $\mathcal{Q}$ in $\F_q[T]$ that are c-Wieferich primes in $\F_{q^{1+sk}}[T]$? Suppose that the prime $\mathcal{P}$ comes from $\alpha\in B_{q,s,\chi}$ and that $\mathcal{P}$ divides $R_{q,s,\alpha}=\prod_{i=1}^s(T^{q}-T-\alpha^{q^i})$. From the proof of Theorem \ref{impo3}, $\mathcal{Q}$ is a prime factor of $R_{q,s,\beta}$. Now, 
\begin{align*}
R_{q,s,\beta}=\prod_{i=1}^s(T^{q}-T-\beta^{q^i})=\prod_{i=1}^s\left(T^{q}-T-\alpha^{q^i}+\frac{k}{(1+sk)}{\rm Tr}_{\F_{q^s}/\F_q}(\alpha)\right)=\prod_{i=1}^s((T+x)^{q}-(T+x)-\alpha^{q^i}),
\end{align*} 
where $x$ is any root of $X^q-X-\frac{k}{(1+sk)}{\rm Tr}_{\F_{q^s}/\F_q}(\alpha)=0$. By Theorem \ref{BIII?}, we have that $x\in \F_{q^{p}}$, (and the set of solutions of $X^q-X-\frac{k}{(1+sk)}{\rm Tr}_{\F_{q^s}/\F_q}(\alpha)=0$ is $x+\F_q$). Comparing the factorisation of $R_{q,s,\beta}$ to that of $R_{q,s,\alpha}$, we see that $R_{q,s,\beta}$ has $p^{-1}q$ distinct prime factors of the form $\mathcal{P}(T+x)$, and $\mathcal{Q}$ is one of them. Observe that, if $p$ is coprime to $s$, then the set map $f_s:E_{q,s}\to\F_p-\{s^{-1}\}$ defined by $k\mapsto \frac{k}{1+sk}$ is surjective, and if $p$ divides $s$, then the set map $f_s:E_{q,s}\to\F_p-\{0\}$ defined by $k\mapsto \frac{k}{1+sk}$ is surjective. So we only need $p-1$ distinct values of $k$ in $E_{q,s}$ to determine all the possible roots to $X^q-X-\frac{k}{(1+sk)}{\rm Tr}_{\F_{q^s}/\F_q}(\alpha)=0$, for a fixed $\alpha$. For this matter, we need only $D_{q,s}=\{k\in\Z_{\geqslant 0}:p\nmid (1+sk), k\leqslant p-1\}$ instead of the full set $E_{q,s}$.

The polynomials $\mathcal{P}=T^6+T^4+T^3+T^2+2T+2$ and $\mathcal{Q}=T^6 + T^4 + 2T^3 + T^2 + T + 2$ are both fixed polynomials in $\F_{3}[T]$, in fact the only fixed prime polynomials of degree $6$. We have already seen that $\mathcal{P}$ is a c-Wieferich prime in $\F_{3}[T]$ while $\mathcal{Q}$ is not. Furthermore, $\mathcal{Q}$ is a c-Wieferich prime in $\F_{3^5}[T]$ while $\mathcal{P}$ is not. In fact, there exists no ring $\F_{3^m}[T]$ in which both $\mathcal{P}$ and $\mathcal{Q}$ are c-Wieferich primes simultaneously. In any case, here we have $E_{3,2}=\{0,2\}$. For $r=5$, solving $X^3-X-2\cdot 5^{-1}{\rm Tr}_{\F_{3^2}/\F_{3}}(\alpha)=0$ yields $X=z$, where $z^3+2z+1=0$ as a solution. So $\mathcal{P}(T+z)=T^6+2z^3T^3+z^6+T^4+T^3z+Tz^3+z^4+T^3+z^3+T^2+2zT+z^2+2T+2z+2=T^6+T^4+(2z^3+z+1)T^3+T^2+(z^3+2z+2)T+z^6+z^4+z^3+z^2+2z+2=T^6 + T^4 + 2T^3 + T^2 + T + 2$. 

Also when we consider the fixed prime polynomials $\mathcal{R}=T^9 + T^6 + T^4 + T^2 + 2T + 2$ and $\mathcal{S}=T^9 + T^6 + T^4 + T^3 + T^2 + T + 1$ in $\F_3[T]$. We have already seen that $\mathcal{R}$ is a c-Wieferich prime in $\F_3[T]$, but not $\mathcal{S}$. In this case, we have $E_{3,3}=\{0,1,2\}$. Since $s\equiv 0(\bmod\;3)$, we obtain c-Wieferich primes in $\F_{3^{1+3k}}[T]$ by first solving $X^3-X-k{\rm Tr}_{\F_{3^3}/\F_3}(\alpha)=0$. For $r=4$, solving $X^3-X-{\rm Tr}_{\F_{3^3}/\F_3}(\alpha)=0$ over $\F_{3^3}$ yields $X=z\in \F_{3^3}$ where $z^3+2z+1=0$. So $\mathcal{R}(T+z)=T^9 + T^6 + T^4 + T^3 + T^2 + T + 1=\mathcal{S}$, which is a c-Wieferich prime in $\F_{3^4}[T]$, that is defined over $\F_3$. Furthermore, for $r=7$, solving $X^3-X-2{\rm Tr}_{\F_{3^3}/\F_3}(\alpha)=0$ over $\F_{3^3}$ yields $x=2z\in \F_{3^3}$, where $z^3+2z+1=0$. We obtain the prime $\mathcal{T}=\mathcal{R}(T+2z)=T^9 + T^6 + T^4 + 2T^3 + T^2 + 2$ which is another fixed prime in $\F_3[T]$ that is a c-Wieferich prime in $\F_{3^7}[T]$. The other fixed primes in $\F_3[T]$ of degree $9$ are $T^9 + 2T^6 + 2T^4 + 2T^2 + 2T + 1, T^9 + 2T^6 + 2T^4 + T^3 + 2T^2 + T + 2$, and $T^9 + 2T^6 + 2T^4 + 2T^3 + 2T^2 + 1$. These are not c-Wieferich primes in any $\F_{3^k}[T]$. In fact, in $\F_{3^{10}}[T]$, the prime $\mathcal{R}$ resurfaces as a c-Wieferich prime, then $\mathcal{S}$ in $\F_{3^{13}}[T]$ and $\mathcal{T}$ in $\F_{3^{16}}[T]$ and the process/cycle is repeated.

\begin{cor}\label{fixedexists1}
Let $\mathcal{G}$ be a fixed subgroup of $\F_q$ of order $p$. If there exists a $\mathcal{G}$-fixed c-Wieferich prime in $\F_q[T]$ of degree $p$, then each $\mathcal{G}$-fixed prime in $\F_q[T]$ of degree $p$ is a c-Wieferich prime in at-least one of the constant field extensions $\F_{q^{r}}[T]$ of $\F_q[T]$ where $r\in \{1,\ldots,p-1\}$.
\end{cor}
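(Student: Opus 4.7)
The plan is to apply Theorem \ref{impo3}(1) with $s=1$, where the condition $p\nmid 1+sk$ becomes $p\nmid 1+k$, so in particular $\{0,1,\ldots,p-2\}\subseteq E_{q,1}$. With $s=1$ the trace ${\rm Tr}_{\F_{q^s}/\F_q}$ is the identity, and the hypothesised $\mathcal{G}$-fixed c-Wieferich prime of degree $p$ corresponds to $\alpha=\chi\in B_{q,1,\chi}$. For each $k\in\{0,1,\ldots,p-2\}$, Theorem \ref{impo3}(1) yields a $\mathcal{G}$-fixed prime in $\F_q[T]$ dividing $T^q-T-\beta_k$, where $\beta_k=\chi/(1+k)$, which is a c-Wieferich prime in $\F_{q^{1+k}}[T]$. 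As $k$ ranges over $\{0,1,\ldots,p-2\}$, the multipliers $1/(1+k)$ exhaust $\F_p^{\times}$, so $\beta_k$ takes all $p-1$ distinct values in $\chi\F_p^{\times}=\mathcal{G}\setminus\{0\}$.

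For each such $\beta_k$, Theorem \ref{BIII?} applied with $r=1$ (noting ${\rm Tr}_{\F_q/\F_q}(\beta_k)=\beta_k\neq 0$) splits $T^q-T-\beta_k$ into $q/p$ distinct almost Artin--Schreier $\mathcal{G}$-fixed primes in $\F_q[T]$ of degree $p$; since $(1+k)^{p-1}=1$ in $\F_p$, all these primes carry the common $\delta=\beta_k^{p-1}=\chi^{p-1}$, confirming their $\mathcal{G}$-fixity. The trinomials $T^q-T-\beta_k$ for distinct $k$ are pairwise coprime, so their irreducible factor sets are disjoint. This produces $(p-1)\cdot q/p$ distinct $\mathcal{G}$-fixed primes of degree $p$ in $\F_q[T]$, each realised as a c-Wieferich prime in $\F_{q^{1+k}}[T]$ for some $k\in\{0,\ldots,p-2\}$, i.e.\ in $\F_{q^r}[T]$ for some $r\in\{1,\ldots,p-1\}$.

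To conclude, one shows that this list already exhausts the $\mathcal{G}$-fixed primes of degree $p$ in $\F_q[T]$. By Theorem \ref{LOKI}, every such prime has the almost Artin--Schreier form $T^p-\chi^{p-1}T-\gamma$; choosing any root $\zeta\in\F_{q^p}$, irreducibility over $\F_q$ forces $\zeta^q=\zeta+w$ for some $w\in\mathcal{G}\setminus\{0\}$, whence the prime divides $T^q-T-w$. Letting $w$ run over $\mathcal{G}\setminus\{0\}$ and invoking disjointness once more, the total number of $\mathcal{G}$-fixed primes of degree $p$ is exactly $(p-1)\cdot q/p$, matching the count in the previous paragraph and forcing equality of the two sets.

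The main technical point is the counting step in the last paragraph: one must verify that the assignment prime $\mapsto w$ is well-defined up to the $\mathcal{G}$-translate orbit of the chosen root and always produces an element of $\mathcal{G}\setminus\{0\}$, and that the $q/p$ factors of $T^q-T-w$ for a given $w$ are pairwise distinct $\mathcal{G}$-fixed primes. Both facts are implicit in the Galois-theoretic argument used in the proof of Theorem \ref{LOKI}, but need to be recorded carefully to make the matching-of-counts argument rigorous.
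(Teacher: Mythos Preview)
Your proof is correct and follows essentially the same strategy as the paper's: apply Theorem~\ref{impo3} with $s=1$ to produce $p^{-1}q$ distinct $\mathcal{G}$-fixed primes that are c-Wieferich in $\F_{q^{1+k}}[T]$ for each $k\in\{0,\ldots,p-2\}$, then match the resulting total $(p-1)q/p$ against the total count of $\mathcal{G}$-fixed primes of degree $p$. The only cosmetic difference is in the final count---the paper obtains $(p-1)q/p$ by computing the image size of $x\mapsto x^p-\chi^{p-1}x$ and invoking Theorem~\ref{CII?}, whereas you partition the $\mathcal{G}$-fixed primes according to which $T^q-T-w$ (with $w\in\mathcal{G}\setminus\{0\}$) they divide; these are two equivalent ways of counting the same set.
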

\begin{proof}
Theorem \ref{impo3} shows that if there exists a $\mathcal{G}$-fixed c-Wieferich prime in $\F_q[T]$ of degree $p$, then we will find $p^{-1}q$ $\mathcal{G}$-fixed c-Wieferich primes in $\F_{q^r}[T]$ for $r=1+sk$ with $0\leqslant k< p-1$. All these primes are distinct giving a total $p^{-1}q(p-1)$ primes. Let $\mathcal{G}=\chi\F_p$ for some $\chi\in \F_q^{\times}$. The mapping $\theta:\F_q\to\F_q$, defined by $\theta(x)=x^p-\chi x$ is an $\F_p$-linear mapping with  has an image of size $p^{-1}q$. By Theorem \ref{CII?}, the number of almost Artin-Schreier primes in $\F_q[T]$ of degree $p$ is $q-p^{-1}q=p^{-1}q(p-1)$. By Theorem \ref{LOKI}, a $\mathcal{G}$-fixed prime of degree $p$ is an almost Artin-Schreier prime, and the result follows.
\end{proof}
\begin{rem}
Corollary \ref{fixedexists1} does not contradict the existence of more than one almost Artin-Schreier c-Wieferich primes in $\F_q[T]$. It asserts that when any such prime is considered, we can use Theorem \ref{impo3}. to find extensions where all the rest of the $\mathcal{G}$-fixed primes in $\F_q[T]$ will be c-Wieferich primes. For example, for $q=13$, there exists two distinct c-Wieferich primes $T^{13}+12T+1$ and $T^{13}+12+8$ of Artin-Schreier type. These stem from different $\alpha$s, in particular $\alpha\in B_{13,1}=\{5, 12\}$. But they will yield the same Artin-Schreier/c-Wieferich primes $\{T^{13}+12T+1,T^{13}+12T+2,\ldots,T^{13}+12T+12\}$ although in different extension rings. For example, if $q=13$ and $r=2$, then the ``line" of c-Wieferich primes $\mathcal{P}_1=T^{13}+12T+1$ yields $T^{13}+12T+7$ while that of $\mathcal{P}_2=T^{13}+12T+8$ yields $T^{13}+12T+4$. 
\end{rem}

From our extensive computations, we were unable to find any non-fixed c-Wieferich prime in $\F_q[T]$ for odd characteristic. However, for characteristic two fields, we found more examples of non-fixed c-Wieferich primes. These results generalize those of \cite[Section 5]{ASB2} on computing c-Wieferich primes. 

\section*{Acknowledgement}
We thank A. Keet, F. Breuer and D. Thakur for their patience and their helpful comments after reading the many drafts of this paper. The first author also thanks the second for hosting him as postdoc at Stockholm University. This research was (in part) carried out at the Department of Mathematics, Makerere University and at the Department of Mathematics of Stockholm University with financial support from the Makerere-Sida Bilateral Programme Phase IV, Project 316 ``Capacity building in Mathematics and its applications".

%%%%%%%%%%%%%%%
%%%%%%%%%%%%%%%
%%%%%%%%%%%%%%%
\begin{filecontents}{cwpARXIV.bib}
@Book{RLidl01,
    author = "R. Lidl and H. Niederreiter",
     title = "{Finite Fields $($The Encyclopedia of Mathematics and it Applications$)$}",
 publisher = "{Cambridge University Press}",
      year = "{1997}"
}
@article{DSTHAKURR,
 author = "D. Thakur",
  title = "{Fermat vs Wilson congruences, arithmetic derivatives and zeta values}",
journal = "{Finite Fields and Appl.}",
   year = "{2015}",
 volume = "{32}",
  pages = "{192--206}"
}
@article{LCarlitz1935,
      title = "{On certain functions connected with polynomials in a Galois field}",
     author = "L. Carlitz",
    journal = "{Duke Math. J.}",
     volume = "{1}",
       year = "{1935}",
      pages = "{137--168}"
}
@BOOK{MROSEN, 
   author = "M. Rosen",
   title  = "{Number Theory in Function Fields}",
publisher = "{Springer-Verlag, Berlin, New York}",
series = "{Graduate Texts in Mathematics}",
volume = "{210}",
   year   = "{2002}",
}
@BOOK{DTHAKUR,
   author = "D. Thakur",
    title = "{Function Field Arithmetic}",
publisher = "{World Scientific Publishing Co, Inc.}",
     year = "{2004}"
}
@ARTICLE{ASB2,
  author = "A. Bamunoba",
   title = "{A note on Carlitz Wieferich primes}",
 journal = "{J. of Number Theory}",
    year = "{2017}",
  volume = "174",
  pages = "343--357"
}
@ARTICLE{dongquan,
  author = "D. Quan Nguyen Ngoc",
   title = "{Carlitz module analogues of Mersenne primes, Wieferich primes, and certain prime elements in cyclotomic function fields}",
 journal = "{J. of Number Theory}",
    year = "{2014}",
  volume = "125",
  pages = "181--193"
}
@incollection{veronique,
  author = "V. Mauduit",
   title = "{Carmichael-Carlitz polynomials and Carlitz-Fermat quotients}",
 booktitle = "{Finite fields and applications (Glasgow, 1995)}",
    publisher = "{Cambridge Univ. Press}",
 series = "{London Mathematical Society Lecture Note Series}",
    year = "{1996}",
  volume = "{233}",
  pages = "229--242"
}
@book{DGOSS,
    title = "{Basic Structures of Function Field Arithmetic}",
   author = "{D. Goss}",
   series = "{Ergebnisse der Mathematik und ihrer Grenzgebiete}",
   volume = "{35}",
     year = "{1998}",
publisher = "{Springer}"
}
@inproceedings{dinesh1994iwasawa,
       title = "{Iwasawa theory and cyclotomic function fields}",
      author = "D. Thakur",
   booktitle = "{Proceedings of the Conference on Arithmetic Geometry $($Tempe, AZ 1993$)$}",
   series = "{Cont. Math.}", 
      volume = "{174}",
       pages = "{157--165}",
        year = "{1994}",
organization = "{Amer. Math. Soc.}"
}
@manual{SAGE,
  Key          = "{Sage}",
  Author       = "W. Stein and et al.",
  Organization = "{The Sage Development Team}",
  Title        = "{Sage Mathematics Software $($Version 8.6$)$}",
  note         = "{ \tt http://www.sagemath.org}"
}
@article{GLactions,
    title = {On the action of ${GL}_2(\mathbb{F}_q)$ on irreducible polynomials over $\mathbb{F}_q$},
   author = "T. Garefalakis",
  journal = {J. Pure Appl. Algebra},
   volume = {215},
    pages = {1835--1843},
     year = {2011},
publisher = {Elsevier}
}
@article{REIS20181087,
   author = "L. Reis",
    title = "{The action of ${\rm GL}_2(\mathbb{F}_q)$ on irreducible polynomials over $\mathbb{F}_q$, revisited}",
  journal = "{J. Pure Appl. Algebra}",
   volume = "{222}",
   number = "{5}",
    pages = "{1087 -- 1094}",
     year = "{2018}"
}
\end{filecontents}
%%%%%%%%%%%%%%%
%%%%%%%%%%%%%%%
%%%%%%%%%%%%%%%

\nocite{*}
\bibliographystyle{amsplain}
\footnotesize{\bibliography{cwpARXIV}}
\end{document}